\definecolor{red-}{rgb}{1.0,0.0,0.0}
\newtheorem{Theo}{Theorem}[section]
\newtheorem{Lemma}[Theo]{Lemma}
\newtheorem{Cor}[Theo]{Corollary}
\newtheorem{Prop}[Theo]{Proposition}
\newtheorem{Que}[Theo]{Question}
\theoremstyle{definition}
\newtheorem{Def}[Theo]{Definition}
\newtheorem{Rem}[Theo]{Remark}
\newtheorem{Ex}[Theo]{Example}
\title{Generalized Hilbert-Kunz function in graded dimension two}
\author{Holger Brenner and Alessio Caminata}
\address{{\small Holger Brenner, Institut f\"ur Mathematik, Universit\"at Osnabr\"uck, Albrechtstrasse 28a, 49076 Osnabr\"uck, Germany}}
\email{{\small holger.brenner@uni-osnabrueck.de}}
\address{{\small Alessio Caminata, Institut f\"ur Mathematik, Universit\"at Osnabr\"uck, Albrechtstrasse 28a, 49076 Osnabr\"uck, Germany}}
\curraddr{ Institut de Math\'{e}matiques, Universit\'{e} de Neuch\^{a}tel\\Rue Emile-Argand 11, CH-2000 Neuch\^{a}tel, Switzerland}
\email{{\small alessio.caminata@unine.ch}}
\begin{document}

\thanks{\textit{Mathematics Subject Classification (2010)}: 13A35, 13D40, 14H60.
\\ \indent  \textit{Keywords}: Frobenius endomorphism, generalized Hilbert-Kunz multiplicity, Harder-Narasimham filtration}

\begin{abstract}
 We prove that the generalized Hilbert-Kunz function of a graded module $M$ over a two-dimensional standard graded normal $K$-domain over an algebraically closed field $K$ of prime characteristic $p$ has the form $gHK(M,q)=e_{gHK}(M)q^{2}+\gamma(q)$, with rational generalized Hilbert-Kunz multiplicity $e_{gHK}(M)$ and a bounded function $\gamma(q)$. Moreover we prove that if $R$ is a $\mathbb{Z}$-algebra, the limit for $p\rightarrow+\infty$ of the generalized Hilbert-Kunz multiplicity $e_{gHK}^{R_p}(M_p)$ over the fibers $R_p$ exists and it is a rational number. 
\end{abstract}

 \maketitle

\section*{Introduction}
Let $R$ be a $d$-dimensional standard graded $K$-domain over a perfect field $K$ of characteristic $p>0$ which is $F$-finite. For every finitely generated $R$-module $M$ and every natural number $e$ we denote by $F^{e*}(M)=M\otimes_R\! ^eR$, the $e$-th iteration of the Frobenius functor given by base change along the Frobenius homomorphism. In particular if $I$ is an ideal of $R$ we have that
$F^{e*}(R/I)\cong R/I^{[p^e]}$. We denote by $q=p^e$ a power of the characteristic. Let $M$ be a graded $R$-module, the function 
\begin{equation*}
 gHK(M,q):=l_R(H^0_{R_{+}}(F^{e*}(M)))
\end{equation*}
and the limit
\begin{equation*}
 e_{gHK}(M):=\lim_{e\rightarrow+\infty}\frac{l_R\left(H^0_{R_+}\left(F^{e*}(M)\right)\right)}{q^d},
\end{equation*}
are called \emph{generalized Hilbert-Kunz function} and \emph{generalized Hilbert-Kunz multiplicity} of $M$ respectively. If $I$ is an $R_+$-primary ideal, then $gHK(R/I,q)$ and $e_{gHK}(R/I)$ coincide with the classical Hilbert-Kunz functions and multiplicity. For a survey on the classical Hilbert-Kunz function and multiplicity see \cite{Hun13}.

\par The generalized Hilbert-Kunz function and multiplicity have been introduced, under a different name and notation, first by Epstein and Yao in \cite{EY11}, and studied in details by Dao and Smirnov in \cite{DS13}, where they prove the existence of $e_{gHK}(M)$ under some assumptions, for example if $M$ is a module over a Cohen-Macaulay isolated singularity.
In the same paper, they study the behaviour of the function $gHK(M,q)$ and compare it with the classical Hilbert-Kunz function. Further study of the generalized Hilbert-Kunz function and multiplicity have been done by Dao and Watanabe in \cite{DW15}, where they compute $e_{gHK}(M)$ if $M$ is a module over a ring of finite Cohen-Macaulay type, or it is an ideal of a normal toric singularity.

\par In this paper we study the function $gHK(M,q)$ for a graded module $M$ over a two-dimensional standard graded normal domain over an algebraically closed field. In \cite{Bre07} Brenner proves that if $I$ is a homogeneous $R_+$-primary ideal, then the Hilbert-Kunz function of $I$ has the following form
\begin{equation*}
 HK(I,q)=e_{HK}(I)q^2+\gamma(q),
\end{equation*}
where $e_{HK}(I)$ is a rational number and $\gamma(q)$ is a bounded function, which is eventually periodic if $K$ is the algebraic closure of a finite field. In \cite[Example 6.2]{DS13}, Dao and Smirnov exhibit numerical evidence that in this setting also the generalized Hilbert Kunz function has the same form. Using an extension of the methods of Brenner, we are able to prove their claim. In fact we obtain in Theorem \ref{TheoremgHKgradedcase} that the generalized Hilbert-Kunz function of a graded module $M$ has the form 
\begin{equation*}
 gHK(M,q)=e_{gHK}(M)q^2+\gamma(q),
\end{equation*}
where $\gamma(q)$ is a bounded function, which is eventually periodic if $K$ is the algebraic closure of a finite field. Moreover we give an explicit formula for $e_{gHK}(M)$ in terms of the Hilbert-Kunz slope of certain locally free sheaves on the projective curve $Y=\mathrm{Proj}R$. As a consequence of this fact we obtain that the generalized Hilbert-Kunz multiplicity $e_{gHK}(M)$ exists and it is a rational number.

\par Furthermore in the last section of the paper, we consider the following problem. Assume that $R$ is a standard graded $\mathbb{Z}$-domain of relative dimension two and $M$ a graded $R$-module. For each prime number $p$, we may consider the reduction $R_p$  of $R$ mod $p$ and the extended module $M_p:=M\otimes_RR_p$. For this module we compute the generalized Hilbert-Kunz multiplicity $e_{gHK}^{R_p}(M_p)$ and we ask whether the limit
\begin{equation*}
 \lim_{p\rightarrow+\infty}e_{gHK}^{R_p}(M_p)
\end{equation*}
exists. Using a result of Trivedi (\cite{Tri07}) we are able to prove (Theorem \ref{limitHKtheorem}) that the previous limit exists, and it is in fact a rational number, assuming that the rings $R_p$ are normal two-dimensional domains for almost all prime numbers.

\par After submitting the first version of this paper, the referee and Asgharzadeh pointed us to a recent paper of Vraciu \cite{Vra16}.
There, she provides another method to prove Theorem \ref{TheoremgHKgradedcase} for ideals by showing that under suitable conditions, which are fulfilled in our situation, the generalized Hilbert-Kunz function of a homogeneous ideal can be expressed as a $\mathbb{Z}$-linear combination of the classical Hilbert-Kunz function of $R_+$-primary ideals. The relevant condition is called $(LC)$ property and was introduced by Hochster and Huneke in \cite{HH90}. 
This condition is known to hold in some special cases, but it is an open problem whether it holds in a more general setting, see \cite{Asg15}.

 \section{Reflexive modules}
 We recall some preliminary facts concerning reflexive modules. 
 Let $R$ be a two-dimensional normal domain with homogeneous maximal ideal $\mathfrak{m}$ and let $U$ be the punctured spectrum of $R$, that is $U=\mathrm{Spec}R\setminus\{\mathfrak{m}\}$.
 \par We denote by $(-)^*$ the functor $\mathrm{Hom}_R(-,R)$. If $M$ is an $R$-module, then the module $M^{**}$ is called the \emph{reflexive hull} of $M$. There is a canonical map 
 \begin{equation*}
  \lambda:M\rightarrow M^{**}.
 \end{equation*}
If $\lambda$ is injective, $M$ is said to be \emph{torsionless}, if $\lambda$ is an isomorphism then $M$ is called \emph{reflexive}. Finitely generated projective modules are reflexive, but the converse does not hold in general. We recall the following geometric characterization of the reflexive hull in the normal situation (cf. \cite[Proposition 3.10]{BD08}):
\begin{equation}\label{geometriccharacterization}
 M^{**}\cong\Gamma(U,\widetilde{M}),
\end{equation}
where $\widetilde{M}$ denotes the coherent sheaf associated to the module $M$. It follows that  the restriction of this sheaf to the punctured spectrum $\widetilde{M}|_U$ coincides with the sheaf $\widetilde{M^{**}}|_U$ on $U$.
Moreover if $M$ is reflexive, the sheaf $\widetilde{M}|_{U}$ is locally free. 

\par The following lemma is a well known fact, see \cite[Proposition 1.4.1]{BH98}, we give a proof here for sake of completeness.
 \begin{Lemma}\label{reflexivecohomologyzero}
 Let $R$ be a normal domain of dimension at least $2$ with homogeneous maximal ideal $\mathfrak{m}$ and let $I$ be a reflexive submodule of $R^n$. Then 
  \begin{equation*}
   H^0_{\mathfrak{m}}(R^n/I)=0.
  \end{equation*}
 \end{Lemma}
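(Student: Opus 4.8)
The plan is to combine the geometric characterization~\eqref{geometriccharacterization} of the reflexive hull with the standard description of $H^0_{\mathfrak m}$ as the kernel of restriction to the punctured spectrum $U=\mathrm{Spec}R\setminus\{\mathfrak m\}$. Concretely, for any finitely generated $R$-module $N$ one has $H^0_{\mathfrak m}(N)=\ker\big(N\to\Gamma(U,\widetilde N)\big)$, since an element of $N$ is annihilated by a power of $\mathfrak m$ exactly when the section it defines vanishes at every point of $U$. Hence it is enough to prove that the canonical map $R^n/I\to\Gamma(U,\widetilde{R^n/I})$ is injective.

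First I would sheafify the exact sequence $0\to I\to R^n\to R^n/I\to 0$ and restrict it to $U$, obtaining a short exact sequence of coherent sheaves $0\to\widetilde I|_U\to\mathcal O_U^{\,n}\to\widetilde{R^n/I}|_U\to 0$. Applying the left-exact functor $\Gamma(U,-)$ yields $0\to\Gamma(U,\widetilde I)\to\Gamma(U,\mathcal O_U^{\,n})\to\Gamma(U,\widetilde{R^n/I})$. By~\eqref{geometriccharacterization} we have $\Gamma(U,\widetilde I)\cong I^{**}$ and $\Gamma(U,\mathcal O_U^{\,n})\cong(R^n)^{**}=R^n$, the latter because $R$, being a normal domain of dimension at least two, is reflexive over itself. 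Since $I$ is reflexive, $I^{**}=I$, so the sequence becomes $0\to I\to R^n\to\Gamma(U,\widetilde{R^n/I})$. Comparing this with the surjection $R^n\twoheadrightarrow R^n/I$ and using functoriality of the canonical maps to sections over $U$, the image of $R^n$ in $\Gamma(U,\widetilde{R^n/I})$ is $R^n/I$ and coincides with the image of the canonical map from $R^n/I$; thus that map is injective, and therefore $H^0_{\mathfrak m}(R^n/I)=0$.

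The step needing the most care --- and really the heart of the matter --- is the equality $\Gamma(U,\widetilde I)=I$: one must know that taking sections over the punctured spectrum returns $I$ itself and not a strictly larger module, which is precisely the content of reflexivity via~\eqref{geometriccharacterization} and is where the hypothesis $\dim R\ge 2$ is used (for $\dim R=1$ the punctured spectrum carries too little information). As an alternative, one can bypass sheaves: the long exact local cohomology sequence of $0\to I\to R^n\to R^n/I\to 0$ gives $H^0_{\mathfrak m}(R^n)\to H^0_{\mathfrak m}(R^n/I)\to H^1_{\mathfrak m}(I)$, where $H^0_{\mathfrak m}(R^n)=0$ because $R^n$ is torsion-free, and $H^1_{\mathfrak m}(I)=0$ because a reflexive module over a normal domain satisfies Serre's condition $S_2$, so $\mathrm{depth}_{\mathfrak m}I\ge\min(2,\dim R)=2$.
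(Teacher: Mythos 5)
Your proposal is correct, and in fact your ``alternative'' at the end is precisely the paper's proof: the long exact local cohomology sequence of $0\to I\to R^n\to R^n/I\to 0$ together with $H^0_{\mathfrak m}(R^n)=H^1_{\mathfrak m}(I)=0$, the latter because reflexive modules over a normal domain have depth at least $2$. Your primary sheaf-theoretic argument is an equivalent translation of the same idea: the identifications $\Gamma(U,\mathcal O_U^n)=R^n$ and $\Gamma(U,\widetilde I)=I$ are exactly the statements $H^0_{\mathfrak m}=H^1_{\mathfrak m}=0$ for $R^n$ and $I$, via the four-term sequence $0\to H^0_{\mathfrak m}(N)\to N\to\Gamma(U,\widetilde N)\to H^1_{\mathfrak m}(N)\to 0$, so nothing genuinely different is being used. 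One small caution: the characterization~\eqref{geometriccharacterization} is stated in the paper only for two-dimensional normal domains, and for general torsion-free $M$ over a normal domain of dimension $>2$ one only has $M\subseteq\Gamma(U,\widetilde M)\subseteq M^{**}$; this is harmless here because $I$ is reflexive, so $\Gamma(U,\widetilde I)=I$ follows directly from $\operatorname{depth}_{\mathfrak m}I\ge 2$ (and likewise at all non-closed points), but you should justify it that way rather than by citing~\eqref{geometriccharacterization} verbatim. The local cohomology route is shorter and avoids this issue entirely, which is presumably why the paper adopts it.
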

\begin{proof}
We consider the short exact sequence $0\rightarrow I\rightarrow R^n\rightarrow R^n/I\rightarrow0$ and we apply the local cohomology functor $H_{\mathfrak{m}}^0(-)$. We obtain a long exact sequence
\begin{equation*}
\cdots\rightarrow H_{\mathfrak{m}}^0(R^n)\rightarrow H_{\mathfrak{m}}^0(R^n/I)\rightarrow H_{\mathfrak{m}}^1(I)\rightarrow \cdots.
\end{equation*}
Since $R^n$ and $I$ are reflexive modules over a normal domain, they have depth at least $2$.
It follows that $H_{\mathfrak{m}}^0(R^n)=H_{\mathfrak{m}}^1(I)=0$, hence $H_{\mathfrak{m}}^0(R^n/I)=0$ too.
\end{proof}

\par We mention also the following result (cf. \cite[Proposition 2.2]{DW15}), concerning the generalized Hilbert-Kunz multiplicity of reflexive ideals.
\begin{Prop}\label{reflexiveproposition}
 Let $R$ be a standard graded domain of dimension $2$, and let $I$ be a homogeneous reflexive ideal of $R$. Then $e_{gHK}(R/I)=0$ if and only if $I$ is principal.
\end{Prop}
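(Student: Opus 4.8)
The strategy has three parts: a reduction to a colength, the easy implication, and the converse via syzygy bundles on the curve $Y=\mathrm{Proj}(R)$. For the reduction, note that $R$, being normal of dimension two, is Cohen--Macaulay, so $H^0_{\mathfrak m}(R)=H^1_{\mathfrak m}(R)=0$, and the same holds for every reflexive ideal by Lemma~\ref{reflexivecohomologyzero}. Applying $H^{\bullet}_{\mathfrak m}(-)$ to $0\to I^{[q]}\to R\to R/I^{[q]}\to0$ gives $H^0_{\mathfrak m}(R/I^{[q]})\cong H^1_{\mathfrak m}(I^{[q]})$, and applying it to $0\to I^{[q]}\to (I^{[q]})^{**}\to (I^{[q]})^{**}/I^{[q]}\to0$ — whose last term has finite length since the two ideals agree on the punctured spectrum by \eqref{geometriccharacterization} — gives $H^1_{\mathfrak m}(I^{[q]})\cong (I^{[q]})^{**}/I^{[q]}$, using $H^1_{\mathfrak m}((I^{[q]})^{**})=0$. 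Hence $gHK(R/I,q)=l_R\big((I^{[q]})^{**}/I^{[q]}\big)$ for all $q$. From this the implication ``$I$ principal $\Rightarrow e_{gHK}(R/I)=0$'' is immediate: if $I=(f)$ then $I^{[q]}=(f^q)$ is free of rank one, hence reflexive, so $gHK(R/I,q)=0$ for every $q$.

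For the converse I would argue by contraposition: assume $I$ is reflexive and not principal and show $e_{gHK}(R/I)>0$. Put $\nu=\deg\mathcal O_Y(1)$ and let $\mathcal L$ be the line bundle on the smooth projective curve $Y$ associated to $I$. By \eqref{geometriccharacterization}, $I=\bigoplus_n H^0(Y,\mathcal L(n))$, so $I$ is principal exactly when $\mathcal L\cong\mathcal O_Y(d)$ for some $d\in\mathbb Z$ — which by hypothesis fails. In codimension one $I^{[q]}$ is the $q$-th power of the locally principal ideal $I$, so the divisorial sheaf of $(I^{[q]})^{**}$ is $\mathcal L^{\otimes q}$ and $(I^{[q]})^{**}_n=H^0(Y,\mathcal L^{\otimes q}(n))$. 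Choose homogeneous generators $f_1,\dots,f_m$ of $I$ of degrees $e_1,\dots,e_m$; since $I$ is not principal, $m\ge2$. Form the syzygy bundle $\mathcal S:=\ker\big(\bigoplus_j\mathcal O_Y(-e_j)\xrightarrow{(f_j)}\mathcal L\big)$, of rank $m-1$. Right-exactness of the Frobenius functor, reflexivity of syzygy modules over the normal ring $R$, and the snake lemma (exactly as in the classical Hilbert--Kunz computation) identify the syzygy sheaf of $f_1^q,\dots,f_m^q$ with the Frobenius pull-back $F^{e*}\mathcal S$, which sits in $0\to F^{e*}\mathcal S\to\bigoplus_j\mathcal O_Y(-qe_j)\to\mathcal L^{\otimes q}\to0$; as a syzygy module is reflexive, $\dim_K I^{[q]}_n=\sum_j h^0(Y,\mathcal O_Y(n-qe_j))-h^0(Y,F^{e*}\mathcal S(n))$ and therefore
\begin{equation*}
 gHK(R/I,q)=\sum_{n\in\mathbb Z}\Big(h^0(Y,\mathcal L^{\otimes q}(n))-\sum_{j=1}^m h^0(Y,\mathcal O_Y(n-qe_j))+h^0(Y,F^{e*}\mathcal S(n))\Big),
\end{equation*}
a finite sum of nonnegative integers. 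By Riemann--Roch the Euler characteristics in the bracket cancel (they form the alternating sum of the displayed short exact sequence twisted by $\mathcal O_Y(n)$), so each summand equals $h^1(Y,\mathcal L^{\otimes q}(n))-\sum_j h^1(Y,\mathcal O_Y(n-qe_j))+h^1(Y,F^{e*}\mathcal S(n))$.

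To finish I would run the Harder--Narasimhan analysis of \cite{Bre07} on this sum. After replacing $q$ by a large enough power the filtration of $F^{e*}\mathcal S$ is strong, with quotient slopes $q\bar\mu_1>\dots>q\bar\mu_k$ of ranks $r_1,\dots,r_k$, and the cohomology estimates for twists of semistable bundles on $Y$ give $gHK(R/I,q)=q^2\!\int_{\mathbb R}\Psi+O(q)$, where, with $\delta=-\deg\mathcal L$ and $(\,\cdot\,)^+=\max(\,\cdot\,,0)$,
\begin{equation*}
 \Psi(t)=\frac1\nu\Big((\delta-t)^++\sum_{i=1}^k r_i\,(-\bar\mu_i-t)^+-\sum_{j=1}^m(e_j\nu-t)^+\Big).
\end{equation*}
This $\Psi$ is piecewise linear, compactly supported, and nonnegative (being the pointwise limit of $gHK(R/I,q)/q$), so $e_{gHK}(R/I)=\int_{\mathbb R}\Psi$, and $e_{gHK}(R/I)=0$ iff $\Psi\equiv0$, i.e.\ iff the convex functions $(\delta-t)^++\sum_i r_i(-\bar\mu_i-t)^+$ and $\sum_j(e_j\nu-t)^+$ coincide; since these already agree for $|t|\gg0$, this is equivalent to the equality of multisets $\{\delta\}\cup\{-\bar\mu_i\ \text{with multiplicity}\ r_i\}=\{e_1\nu,\dots,e_m\nu\}$. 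In particular, $e_{gHK}(R/I)=0$ would force $\delta=e_{j_0}\nu$ for some $j_0$, hence $\deg\mathcal L=-\delta=\deg\mathcal O_Y(-e_{j_0})$; but $f_{j_0}$ is a nonzero global section of $\mathcal L(e_{j_0})$, equivalently a nonzero morphism $\mathcal O_Y(-e_{j_0})\to\mathcal L$, and a nonzero morphism between line bundles of equal degree on the integral curve $Y$ is an isomorphism, so $\mathcal L\cong\mathcal O_Y(-e_{j_0})$ — contradicting that $I$ is not principal. Thus $e_{gHK}(R/I)>0$.

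The hard part is the asymptotic evaluation $gHK(R/I,q)=q^2\int\Psi+O(q)$: this is precisely where one needs the strong Harder--Narasimhan/Frobenius machinery of \cite{Bre07} (stabilization of the filtration under Frobenius pull-back, together with the cohomology bounds for twists of semistable bundles), and it is the same input that underlies the main theorem of this paper. Everything after it — reading off the slope multiset and deducing that $\mathcal L$ is a twist of $\mathcal O_Y$ — is short, but it does use that the filtration is strong so that the slopes are organized well enough to extract the multiset.
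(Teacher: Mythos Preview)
The paper does not give a proof of this proposition: it is quoted from \cite[Proposition~2.2]{DW15}, and the only argument supplied in the text is the paragraph immediately following the statement, which handles the easy implication (principal $\Rightarrow e_{gHK}=0$) exactly as you do, via Lemma~\ref{reflexivecohomologyzero}. So for that direction your proposal and the paper coincide.

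Your treatment of the converse is therefore not a comparison but an addition, and it is essentially correct in the paper's ambient setting. What you are doing is unpacking Corollary~\ref{corollaryHKideal}: with $\nu=\deg Y$ and $\delta=-\deg\mathcal L$ one checks directly that
\[
\int_{\mathbb R}\Psi=\frac{1}{2\nu}\Big(\sum_k r_k\bar\mu_k^2-\nu^2\sum_j e_j^2+\delta^2\Big)=e_{gHK}(R/I),
\]
and then the multiset identity forced by $\Psi\equiv0$ yields $\delta=e_{j_0}\nu$ and hence $\mathcal L\cong\mathcal O_Y(-e_{j_0})$, contradicting non-principality. Two points deserve tightening. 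First, the nonnegativity of $\Psi$ is not literally obtained as ``the pointwise limit of $gHK(R/I,q)/q$''; rather, each value $\Psi(t)$ arises as the rescaled limit of the nonnegative graded pieces $\dim\operatorname{coker}(\varphi_n)$ along $n\nu\approx qt$, which is how the estimates in \cite{Bre07} are organized --- you should say this explicitly. Second, you are tacitly importing hypotheses (normality of $R$, algebraically closed base field of positive characteristic) that are ambient in the paper but absent from the bare statement of the proposition; the cited result in \cite{DW15} applies more broadly.

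There is also a structural issue: your argument rests on the strong Harder--Narasimhan machinery and effectively on Corollary~\ref{corollaryHKideal}, both of which appear only in Sections~2--3. As an alternative proof placed after Theorem~\ref{TheoremgHKgradedcase} it works; as a replacement for the citation at this point in Section~1 it would create a forward dependence.
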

The fact that principal reflexive ideals have generalized Hilbert-Kunz multiplicity $0$ holds also in dimension $\geq2$ and is a consequence of Lemma \ref{reflexivecohomologyzero}. In fact, if $I$ is a principal ideal, then $I^{[p^e]}$ is again principal, and in particular reflexive. It follows that $H^0_{R_{+}}(F^{e*}(R/I))\cong H^0_{R_{+}}(R/I^{[p^e]})=0$, so $e_{gHK}(R/I)=0$.

\begin{Lemma}\label{HKclassgroup}
	Let $R$ be a normal $K$-domain of dimension $d\geq2$ over an algebraically closed field $K$ of prime characteristic $p$. 
	Let $I$ be a non-zero homogeneous ideal of $R$ such that $e_{gHK}(R/I)$ exists and let $f\neq0$ be a homogeneous element of $R$. Then
	\begin{equation*}
	e_{gHK}(R/fI)=e_{gHK}(R/I).  
	\end{equation*}
\end{Lemma}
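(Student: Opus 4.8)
The plan is to establish the stronger assertion that $gHK(R/fI,q)=gHK(R/I,q)$ for \emph{every} $q=p^{e}$; dividing by $q^{d}$ and letting $e\to\infty$ then yields $e_{gHK}(R/fI)=e_{gHK}(R/I)$ (and in particular shows that the left-hand side exists). We may assume $f$ is not a unit, since otherwise $fI=I$ and there is nothing to prove; thus $f\in R_{+}$. Recall that $F^{e*}(R/J)\cong R/J^{[q]}$ for every homogeneous ideal $J$, and observe that $(fI)^{[q]}=f^{q}I^{[q]}$, which is immediate by comparing generators. Hence $gHK(R/fI,q)=l_{R}\bigl(H^{0}_{R_{+}}(R/f^{q}I^{[q]})\bigr)$, and it suffices to show that multiplication by $f^{q}$ induces an isomorphism $H^{0}_{R_{+}}(R/I^{[q]})\cong H^{0}_{R_{+}}(R/f^{q}I^{[q]})$.

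I would deduce this from the following general claim: for every nonzero $g\in R_{+}$ and every homogeneous ideal $J$ one has $(gJ)^{\mathrm{sat}}=g\cdot J^{\mathrm{sat}}$, where $J^{\mathrm{sat}}=\{x\in R:R_{+}^{k}x\subseteq J\text{ for some }k\}$ denotes the saturation, so that $H^{0}_{R_{+}}(R/J)=J^{\mathrm{sat}}/J$. The inclusion $g\cdot J^{\mathrm{sat}}\subseteq(gJ)^{\mathrm{sat}}$ is clear. For the converse, suppose $R_{+}^{k}x\subseteq gJ\subseteq gR$; then the class of $x$ in $R/gR$ is annihilated by $R_{+}^{k}$ and hence lies in $H^{0}_{R_{+}}(R/gR)$. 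But $gR\cong R$ is a free, hence reflexive, submodule of $R$, so $H^{0}_{R_{+}}(R/gR)=0$ by Lemma~\ref{reflexivecohomologyzero}; this is the only step that uses the hypothesis $d\geq 2$ (and the normality of $R$). Therefore $x=gy$ with $y\in R$ homogeneous, and cancelling the nonzerodivisor $g$ from $R_{+}^{k}gy\subseteq gJ$ gives $y\in J^{\mathrm{sat}}$, i.e. $x\in g\cdot J^{\mathrm{sat}}$. Since $g$ is a nonzerodivisor, multiplication by $g$ is then an isomorphism $J^{\mathrm{sat}}/J\xrightarrow{\ \sim\ }gJ^{\mathrm{sat}}/gJ=(gJ)^{\mathrm{sat}}/gJ$, that is, $H^{0}_{R_{+}}(R/J)\cong H^{0}_{R_{+}}(R/gJ)$.

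Applying this with $J=I^{[q]}$ and $g=f^{q}$ gives $l_{R}(H^{0}_{R_{+}}(R/f^{q}I^{[q]}))=l_{R}(H^{0}_{R_{+}}(R/I^{[q]}))$, hence $gHK(R/fI,q)=gHK(R/I,q)$ for all $q$, and the statement follows. The one substantive point is the saturation identity $(gJ)^{\mathrm{sat}}=g\cdot J^{\mathrm{sat}}$, and within it the vanishing $H^{0}_{R_{+}}(R/gR)=0$ coming from Lemma~\ref{reflexivecohomologyzero}; the rest is bookkeeping.
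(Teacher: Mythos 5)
Your proof is correct, and it proves the same strengthened statement the paper's argument implicitly gives, namely $gHK(R/fI,q)=gHK(R/I,q)$ for every $q$; but the mechanism is different. The paper uses the long exact sequence of $0\rightarrow I\rightarrow R\rightarrow R/I\rightarrow 0$ together with $H^0_{\mathfrak m}(R)=H^1_{\mathfrak m}(R)=0$ (depth of $R$ at least $2$) to identify $H^0_{\mathfrak m}(R/J)\cong H^1_{\mathfrak m}(J)$, so that the quantity being measured depends only on the abstract $R$-module $J$; the observation $(fI)^{[q]}=f^qI^{[q]}\cong I^{[q]}$ then finishes immediately. You instead stay at the level of $H^0$ and prove the saturation identity $(gJ)^{\mathrm{sat}}=g\,J^{\mathrm{sat}}$, whose only nontrivial input is $H^0_{R_+}(R/gR)=0$, i.e.\ Lemma~\ref{reflexivecohomologyzero} applied to the free submodule $gR\subseteq R$. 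Both arguments invoke the normality/dimension hypothesis at the analogous point (depth $\geq 2$ of $R$, respectively of $gR\cong R$), and both hinge on the same identity $(fI)^{[q]}=f^qI^{[q]}$. The paper's route is shorter once one is willing to pass to $H^1$; yours is more elementary and makes explicit the concrete fact that saturation commutes with multiplication by a nonzerodivisor in this setting, which is perhaps more transparent. One cosmetic remark: the reduction to $f\in R_+$ is unnecessary, since your saturation argument only uses that $g$ is a nonzero (hence nonzerodivisor) element of the domain $R$.
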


\begin{proof}
	From the short exact sequence $0\rightarrow I\rightarrow R\rightarrow R/I\rightarrow0$ and the corresponding long exact sequence of local cohomology modules with support in $\mathfrak{m}:=R_+$ we obtain that $H_{\mathfrak{m}}^0(R/I)\cong H_{\mathfrak{m}}^1(I)$.
	It follows that the generalized Hilbert-Kunz multiplicity can be seen as
	\begin{equation*}
e_{gHK}(R/I)=\lim_{e\rightarrow+\infty}\frac{l_R\left(H_{\mathfrak{m}}^1(I^{[q]})\right)}{q^d}.
	\end{equation*}
	Then the $R$-module isomorphism $f^qI^{[q]}\cong I^{[q]}$ implies the claim.	
\end{proof}

\begin{Rem}	Let $[I]$ be an element of the divisor class group $\mathrm{Cl}(R)$ of $R$, and let $I$ be a homogeneous reflexive ideal representative of this element. 
If $R$ is a standard graded normal $K$-domain of dimension $2$, with $K$ algebraically closed and of positive characteristic, we obtain a function $e_{gHK}(-):\mathrm{Cl}(R)\rightarrow\mathbb{Q}$, $[I]\mapsto e_{gHK}(R/I)$. 
Thanks to Theorem \ref{TheoremgHKgradedcase} and Lemma \ref{HKclassgroup}, this function is well-defined and we have $e_{gHK}([R])=0$. 
	This does not mean that the generalized Hilbert-Kunz multiplicity for all ideals $I$ which are invertible on the punctured spectrum depends only on $[I]$. For example, the homogeneous maximal ideal $\mathfrak{m}$ and its reflexive hull $\mathfrak{m}^{**}=R$ define the same element in the class group, but $e_{gHK}(R/\mathfrak{m})=e_{HK}(\mathfrak{m})\neq0$ in general, while $e_{gHK}(R/R)=0$. 
Moreover Proposition \ref{reflexiveproposition} implies that the preimage of $0$ is trivial. This does not mean that the function $e_{gHK}(-)$ is injective, since in general it is not a group homomorphism as in the case of Example \ref{exampleidealofapoint}.
\end{Rem}
\par Therefore the following question makes sense.
\begin{Que}
Given two homogeneous reflexive ideals $I$ and $J$, is there a formula for $e_{gHK}([IJ])$ in terms of $e_{gHK}([I])$ and $e_{gHK}([J])$?
\end{Que}

\section{The Hilbert-Kunz slope}
\par Let $Y$ be a smooth projective curve over an algebraically closed field with a very ample invertible sheaf of degree $\deg\mathcal{O}_Y(1)=\deg Y$. We recall some classical notions of vector bundles and some definitions from \cite{Bre06} and \cite{Bre07}. We refer to those papers for further details and explanations.
\par Let $\mathcal{S}$ be a locally free sheaf of rank $r$ over $X$. The degree of $\mathcal{S}$ is defined as the degree of the corresponding determinant line bundle $\mathrm{deg}\mathcal{S}=\deg\bigwedge^r\mathcal{S}$. The slope of $\mathcal{S}$ is $\mu(\mathcal{S})=\deg\mathcal{S}/r$. The degree is additive on short exact sequences and moreover $\mu(\mathcal{S}\otimes\mathcal{T})=\mu(\mathcal{S})+\mu(\mathcal{T})$.
\par The sheaf $\mathcal{S}$ is called \emph{semistable} if for every locally free subsheaf $\mathcal{T}\subseteq\mathcal{S}$ the inequality $\mu(\mathcal{T})\leq\mu(\mathcal{S})$ holds. If the strict inequality $\mu(\mathcal{T})<\mu(\mathcal{S})$ holds for every proper subsheaf $\mathcal{T}\subset\mathcal{S}$, then $\mathcal{S}$ is called \emph{stable}.
\par For any locally free sheaf $\mathcal{S}$ on $Y$ there exists a unique filtration, called \emph{Harder-Narasimham filtration}, $\mathcal{S}_1\subseteq\dots\subseteq\mathcal{S}_t=\mathcal{S}$ with the following properties:
\begin{compactitem}
 \item $\mathcal{S}_k$ is locally free,
 \item $\mathcal{S}_k/\mathcal{S}_{k-1}$ is semistable,
 \item $\mu(\mathcal{S}_k/\mathcal{S}_{k-1})>\mu(\mathcal{S}_{k+1}/\mathcal{S}_{k})$.
\end{compactitem}
\par If the base field has positive characteristic, we can consider the absolute Frobenius morphism $F:Y\rightarrow Y$ on the curve  and its iterates $F^e$. In general the pull-back via $F^e$ of the Harder-Narasimham filtration of $\mathcal{S}$ is not the Harder-Narasimham filtration of $F^{e*}\mathcal{S}$, since the quotients $F^{e*}(\mathcal{S}_k)/F^{e*}(\mathcal{S}_{k-1})$ need not to be semistable.
\par In \cite{Lan04}, Langer proved that for $q\gg0$ there exists the so-called  \emph{strong Harder-Narasimham} filtration of $F^{e*}(\mathcal{S})$. In fact there exists a natural number $e_0$ such that the Harder-Narasimham filtration of $F^{e_0*}(\mathcal{S})$
\begin{equation*}
 0\subseteq \mathcal{S}_{e_0,1}\subseteq\dots\subseteq\mathcal{S}_{e_0,t}=F^{e_0*}(\mathcal{S})
\end{equation*}
has the property that the quotients $F^{e*}(\mathcal{S}_{e_0,k})/F^{e*}(\mathcal{S}_{e_0,k-1})$ of the pullback along $F^{e}$ are semistable.
Thus for $e\geq e_0$ we have $F^{e*}(\mathcal{S})=F^{(e-e_0)*}(F^{e_0*}(\mathcal{S}))$, and the Harder-Narasimham filtration of $F^{e*}(\mathcal{S})$ is given by
\begin{equation*}
 F^{(e-e_0)*}(\mathcal{S}_{e_0,1}) \subseteq\dots\subseteq F^{(e-e_0)*}(\mathcal{S}_{e_0,t})=F^{e*}(\mathcal{S}).
\end{equation*}
For ease of notation we put $\mathcal{S}_{e,k}:=F^{(e-e_0)*}(\mathcal{S}_{e_0,k})$ for every $e\geq e_0$ and $0\leq k\leq t$.
The length $t$ of such a sequence and the ranks of the quotients $\mathcal{S}_{e,k}/\mathcal{S}_{e,k-1}$ are independent of $e$, while the degrees are not. We define the following rational numbers:
\begin{itemize}
 \item $\bar{\mu}_k=\bar{\mu}_k(\mathcal{S})=\displaystyle\frac{\mu(\mathcal{S}_{e,k}/\mathcal{S}_{e,k-1})}{p^e}$, where $\mu(-)$ denotes the usual slope of the bundle,
 \item $r_k=\mathrm{rank}(\mathcal{S}_{e,k}/\mathcal{S}_{e,k-1})$,
 \item $\nu_k=-\displaystyle\frac{\bar{\mu}_k}{\mathrm{deg}Y}$.
\end{itemize}

\begin{Rem}
We point out that the numbers $\bar{\mu}$, $r_k$ and $\nu_k$ are rational and independent from $e$  for $e\gg0$. In fact we have that $\sum_{k=1}^tr_k\mu(\mathcal{S}_{e,k}/\mathcal{S}_{e,k-1})=\deg(F^{e*}\mathcal{S})=p^e\deg\mathcal{S}$, which implies the relation
\begin{equation*}
\sum_{k=1}^tr_k\bar{\mu}_k=\deg\mathcal{S}. 
\end{equation*}
\end{Rem}

\begin{Def}
 Let $\mathcal{S}$ be a locally free sheaf over a projective curve over an algebraically closed field of prime characteristic and let $\bar{\mu}$ and $r_k$ be as above. The \emph{Hilbert-Kunz slope} of $\mathcal{S}$ is the rational number
 \begin{equation*}
  \mu_{HK}(\mathcal{S})=\sum_{k=1}^tr_k\bar{\mu}_k^2.
 \end{equation*}
\end{Def}
This notion was introduced by the first author in \cite{Bre07}, where he also proves Theorem \ref{theoremalternatingsum} below.

\begin{Ex}\label{slopelinebundle}
 Let $\mathcal{L}$ be a line bundle, then $\mathcal{L}$ is semistable of slope $\mu(\mathcal{L})=\deg{\mathcal{L}}$. The pullback along Frobenius is again a line bundle: $F^{e*}\mathcal{L}=\mathcal{L}^{q}=\mathcal{L}^{\otimes q}$, with $q=p^e$. It follows that $0\subseteq\mathcal{L}$ is the strong Harder-Narasimham filtration of $\mathcal{L}$ and the Hilbert-Kunz slope is just:
 \begin{equation*}
  \mu_{HK}(\mathcal{L})=(\deg\mathcal{L})^2.
 \end{equation*}
\end{Ex}

\begin{Ex}\label{slopesum}
 Let $d_1<d_2<\cdots<d_m$ be non-negative integers and let $\mathcal{T}:=\bigoplus_{i=1}^m\mathcal{O}(-d_i)^{\oplus r_i}$, where $\mathcal{O}:=\mathcal{O}_Y$ and $r_i\in\mathbb{N}$. The Harder-Narasimham filtration of $\mathcal{T}$ is
 \begin{equation*}
  0\subseteq\mathcal{O}(-d_1)^{\oplus r_1}\subseteq\mathcal{O}(-d_1)^{\oplus r_1}\oplus\mathcal{O}(-d_2)^{\oplus r_2}\subseteq\dots\subseteq\bigoplus_{i=1}^m\mathcal{O}(-d_i)^{\oplus r_i}.
 \end{equation*}
The quotients are direct sums of line bundles of the same degree, so their pullbacks under Frobenius are semistable. Hence this is also the strong Harder-Narasimham filtration of $\mathcal{T}$ with invariants $r_k$, and $\bar{\mu}_k=\deg\mathcal{O}(-d_k)=-d_k\deg\mathcal{O}_Y(1)=-d_k\deg Y$. Then  the Hilbert-Kunz slope of $\mathcal{T}$ is 
\begin{equation*}
 \mu_{HK}(\mathcal{T})=(\deg Y)^2\sum_{k=1}^mr_kd_k^2.
\end{equation*}
\end{Ex}

\begin{Theo}[Brenner \cite{Bre07}]\label{theoremalternatingsum}
 Let $Y$ denote a smooth projective curve of genus $g$ over an algebraically closed field of positive characteristic $p$ and let $q=p^e$ for a non-negative integer $e$. Let $0\rightarrow\mathcal{S}\rightarrow\mathcal{T}\rightarrow\mathcal{Q}\rightarrow0$ denote a short exact sequence of locally free sheaves on $Y$. Then the following hold.
 \begin{enumerate}
  \item For every non-negative integer $e$ the alternating sum of the dimensions of the global sections is
      \begin{equation*}
  \begin{split}
   \sum_{m\in\mathbb{Z}}\left(h^0(F^{e*}\mathcal{S}(m))-h^0(F^{e*}\mathcal{T}(m))+h^0(F^{e*}\mathcal{Q}(m))\right)\\
   =\frac{q^2}{2\deg Y}\left(\mu_{HK}(\mathcal{S})-\mu_{HK}(\mathcal{T})+\mu_{HK}(\mathcal{Q})\right)+O(q^0).
  \end{split}
  \end{equation*}

\item If the field is the algebraic closure of a finite field, then the $O(q^0)$-term is eventually periodic.
 \end{enumerate}
\end{Theo}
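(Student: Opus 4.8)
The plan is to reduce the alternating sum of global sections to a computation with semistable bundles, where Riemann--Roch applies, by means of the strong Harder--Narasimham filtration, and then to isolate the $q^{2}$-term. Write $q=p^{e}$, put
\[
\Sigma_{e}:=\sum_{m\in\mathbb{Z}}\bigl(h^{0}(F^{e*}\mathcal{S}(m))-h^{0}(F^{e*}\mathcal{T}(m))+h^{0}(F^{e*}\mathcal{Q}(m))\bigr),
\]
and for a quantity $X_{\mathcal{V}}$ attached to each of $\mathcal{V}=\mathcal{S},\mathcal{T},\mathcal{Q}$ abbreviate $\nabla X:=X_{\mathcal{S}}-X_{\mathcal{T}}+X_{\mathcal{Q}}$; thus $\nabla X=0$ whenever $X$ is additive on $0\to\mathcal{S}\to\mathcal{T}\to\mathcal{Q}\to0$ (degree, rank, and $\chi(F^{e*}(-)(m))$). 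Since the assertion is asymptotic in $q$, I may assume $q\gg0$. By Langer \cite{Lan04} each $F^{e*}\mathcal{V}$ has, for $e\ge e_{0}$, a filtration with semistable quotients $\mathcal{Q}^{\mathcal{V}}_{e,k}$ ($1\le k\le t_{\mathcal{V}}$) of rank $r^{\mathcal{V}}_{k}$ and slope $q\bar{\mu}^{\mathcal{V}}_{k}$. I would first show that for $q\gg0$ the connecting maps $H^{0}(\mathcal{Q}^{\mathcal{V}}_{e,k}(m))\to H^{1}(\mathcal{V}_{e,k-1}(m))$ all vanish: a nonzero section of the semistable sheaf $\mathcal{Q}^{\mathcal{V}}_{e,k}(m)$ forces $m\ge q\nu^{\mathcal{V}}_{k}$, while $H^{1}(\mathcal{V}_{e,k-1}(m))\ne0$ forces $H^{1}(\mathcal{Q}^{\mathcal{V}}_{e,j}(m))\ne0$ for some $j<k$, hence $m\le q\nu^{\mathcal{V}}_{j}+(2g-2)/\deg Y$, which is impossible for $q$ larger than a bound depending only on $g$, $\deg Y$ and the gaps $\nu^{\mathcal{V}}_{k}-\nu^{\mathcal{V}}_{k-1}$. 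Consequently $h^{0}(F^{e*}\mathcal{V}(m))=\sum_{k}h^{0}(\mathcal{Q}^{\mathcal{V}}_{e,k}(m))$ for all $m$.

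The next step is the analysis of a single semistable bundle $\mathcal{W}$ of rank $r$ and slope $q\bar{\mu}$ on $Y$. With $\nu=-\bar{\mu}/\deg Y$, Riemann--Roch gives $\chi(\mathcal{W}(m))=r\deg Y\,(m-a_{\mathcal{W}})$, $a_{\mathcal{W}}=q\nu+(g-1)/\deg Y$; moreover $H^{0}(\mathcal{W}(m))=0$ when $\mu(\mathcal{W}(m))<0$, $H^{1}(\mathcal{W}(m))=0$ when $\mu(\mathcal{W}(m))>2g-2$, and in the intermediate range a Clifford-type bound for semistable bundles keeps $h^{0}(\mathcal{W}(m))$ bounded by a constant depending only on $r$ and $g$. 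Hence
\[
h^{0}(\mathcal{W}(m))=\max\{0,\chi(\mathcal{W}(m))\}+\varepsilon_{\mathcal{W}}(m),
\]
where $\varepsilon_{\mathcal{W}}$ is supported on a set of integers $m$ of cardinality bounded independently of $q$ (essentially $(2g-2)/\deg Y$, coming from the transitional slope range) and is itself bounded there by a constant depending only on $r,g$. Summed over the quotients of $\mathcal{S},\mathcal{T},\mathcal{Q}$, the total contribution of these error terms to $\Sigma_{e}$ is $O(q^{0})$.

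Combining the two steps, $\Sigma_{e}=\sum_{m}\nabla\bigl(\sum_{k}f^{\mathcal{V}}_{k}(m)\bigr)+O(q^{0})$, where $f^{\mathcal{V}}_{k}(m)=\max\{0,\chi(\mathcal{Q}^{\mathcal{V}}_{e,k}(m))\}$. This alternating sum has vanishing tails — for $m\gg0$ each $f^{\mathcal{V}}_{k}(m)=\chi(\mathcal{Q}^{\mathcal{V}}_{e,k}(m))$, so $\sum_{k}f^{\mathcal{V}}_{k}(m)=\chi(F^{e*}\mathcal{V}(m))$ and $\nabla$ of it is $0$; for $m\ll0$ all $f^{\mathcal{V}}_{k}(m)=0$ — so it may be evaluated over a finite range $m_{1}\le m\le M$, where the summations can be interchanged. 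Using the identity $\sum_{m=\lceil a\rceil}^{M}(m-a)=\tfrac{(M-a)^{2}}{2}+\tfrac{M-a}{2}+\tfrac{\{a\}(1-\{a\})}{2}$ (with $\{a\}=\lceil a\rceil-a$) for each ramp function, and substituting, every term depending on $M$, and every cross term coming from $\tfrac{(M-a)^{2}}{2}$, cancels after applying $\nabla$, thanks to additivity of degree and rank together with $\sum_{k}r^{\mathcal{V}}_{k}a_{\mathcal{Q}^{\mathcal{V}}_{e,k}}=-\chi(F^{e*}\mathcal{V})/\deg Y$. What survives is $\Sigma_{e}=\tfrac{\deg Y}{2}\,\nabla\bigl(\sum_{k}r^{\mathcal{V}}_{k}a_{\mathcal{Q}^{\mathcal{V}}_{e,k}}^{2}\bigr)+O(q^{0})$. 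Expanding $a_{\mathcal{Q}^{\mathcal{V}}_{e,k}}^{2}=\bigl(q\nu^{\mathcal{V}}_{k}+(g-1)/\deg Y\bigr)^{2}$ and using $\sum_{k}r^{\mathcal{V}}_{k}\nu^{\mathcal{V}}_{k}=-\deg\mathcal{V}/\deg Y$ and $\sum_{k}r^{\mathcal{V}}_{k}=\mathrm{rank}\,\mathcal{V}$, the terms linear in $q$ and the $q$-free terms also die under $\nabla$, leaving $\tfrac{q^{2}\deg Y}{2}\,\nabla\bigl(\sum_{k}r^{\mathcal{V}}_{k}(\nu^{\mathcal{V}}_{k})^{2}\bigr)$. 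Since $\bar{\mu}^{\mathcal{V}}_{k}=-\nu^{\mathcal{V}}_{k}\deg Y$ gives $\sum_{k}r^{\mathcal{V}}_{k}(\nu^{\mathcal{V}}_{k})^{2}=\mu_{HK}(\mathcal{V})/(\deg Y)^{2}$, this is $\tfrac{q^{2}}{2\deg Y}\bigl(\mu_{HK}(\mathcal{S})-\mu_{HK}(\mathcal{T})+\mu_{HK}(\mathcal{Q})\bigr)+O(q^{0})$, which is (1).

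For (2), suppose $K=\overline{\mathbb{F}}_{p}$. Then $Y$, the three sheaves, and — after enlarging $e_{0}$ — the strong Harder--Narasimham filtrations are all defined over a fixed finite field $\mathbb{F}_{q_{0}}$. The $O(q^{0})$-term splits into the ``fractional-part'' contributions $\tfrac{\deg Y}{2}\,\nabla\bigl(\sum_{k}r^{\mathcal{V}}_{k}\{a_{\mathcal{Q}^{\mathcal{V}}_{e,k}}\}(1-\{a_{\mathcal{Q}^{\mathcal{V}}_{e,k}}\})\bigr)$ and the ``window corrections'' $\nabla\bigl(\sum_{k}\sum_{m}\varepsilon_{\mathcal{Q}^{\mathcal{V}}_{e,k}}(m)\bigr)$. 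The former depends only on $q=p^{e}$ modulo the fixed common denominator of the rational numbers $\nu^{\mathcal{V}}_{k}$, hence is eventually periodic in $e$. The latter is the genuinely hard point: one must prove that, as $e\to\infty$, the semistable bundles $\mathcal{Q}^{\mathcal{V}}_{e,k}$, twisted into their ($q$-independent) transitional slope windows, run through only finitely many isomorphism classes — a finiteness statement for semistable bundles of bounded rank and bounded degree over $\mathbb{F}_{q_{0}}$, where the Frobenius operates on the finite sets of $\mathbb{F}_{q_{0}^{n}}$-rational points of the relevant moduli spaces. This is exactly the content worked out in \cite{Bre07} (using \cite{Lan04}, in the spirit of \cite{Tri07}), and it is the main obstacle of the whole argument; granting it, the $O(q^{0})$-term is eventually periodic, which is (2).
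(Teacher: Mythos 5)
The paper gives no proof of this statement; it is imported verbatim from \cite{Bre07} (and only its consequences are used), so there is nothing internal to compare against, and the relevant benchmark is the argument of the cited source. Your reconstruction of part (1) is correct and follows that same route: Langer's strong Harder-Narasimham filtration, the splitting $h^0(F^{e*}\mathcal{V}(m))=\sum_k h^0(\mathcal{Q}^{\mathcal{V}}_{e,k}(m))$ for $q\gg0$ obtained from the slope constraints on $H^0$ and $H^1$ of semistable twists (your gap argument $q(\nu_k-\nu_j)\leq (2g-2)/\deg Y$ is the right reason the connecting maps die), Riemann--Roch plus a bounded transitional window for each semistable quotient, and the telescoping of all $M$-dependent and lower-order terms under the alternating sum via additivity of rank, degree and Euler characteristic; I checked your summation identity and the identities $\sum_k r_k a_k=-\chi(F^{e*}\mathcal{V})/\deg Y$ and $\sum_k r_k\nu_k^2=\mu_{HK}(\mathcal{V})/(\deg Y)^2$, and the bookkeeping is right. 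For part (2) you correctly isolate the two sources of the $O(q^0)$-term and dispose of the fractional-part contribution, but you explicitly defer the essential input --- that over a finite base field the Frobenius pullbacks of the semistable quotients, twisted into their transitional slope windows, run through only finitely many isomorphism classes, so that the window corrections become eventually periodic in $e$ --- to \cite{Bre07}. That deferred finiteness statement is the actual content of (2), so your treatment of (2) is an accurate outline rather than a proof; given that the theorem is itself a citation in this paper, that is a reasonable stopping point, but be aware that this is where the remaining work lies.
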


The alternating sum in Theorem \ref{theoremalternatingsum} is in fact a finite sum for every $q$. For $m\ll 0$ the locally free sheaves have no global sections, so all the terms are $0$ and for $m\gg0$ we have $H^1(Y,F^{e*}\mathcal{S}(m))=0$ and the sum is $0$. Moreover the sum is the dimension of the cokernel
\begin{equation*}
 \sum_{m\in\mathbb{Z}}\dim(\Gamma(Y,F^{e*}\mathcal{Q}(m)))/\mathrm{im}(\Gamma(Y,F^{e*}\mathcal{T}(m))).
\end{equation*}

\par In \cite{Bre07}, Brenner uses Theorem \ref{theoremalternatingsum} to prove that the Hilbert-Kunz function of a homogeneous $R_+$-primary ideal $I$ in a normal two-dimensional standard-graded $K$-domain $R$ has the following form
\begin{equation*}
 HK(I,q)=e_{HK}(I)q^2+\gamma(q),
\end{equation*}
where $e_{HK}(I)$ is a rational number and $\gamma(q)$ is a bounded function, which is eventually periodic if $K$ is the algebraic closure of a finite field. In particular if $I$ is generated by homogeneous elements $f_1,\dots,f_n$ of degrees $d_1,\dots,d_n$ and $r_k$, $\bar{\mu}_k$ denote the numerical invariants of the strong Harder-Narasimham filtration of the locally free sheaf $\mathrm{Syz}(f_1,\dots,f_n)$ on the curve $Y=\mathrm{Proj}R$ then the Hilbert-Kunz multiplicity of $I$ is given by
\begin{equation*}
 e_{HK}(I)=\frac{1}{2\deg Y}\left(\sum_{k=1}^tr_k\bar{\mu}_k^2-(\deg Y)^2\sum_{i=1}^nd_i^2\right).
\end{equation*}

\par In Section 3 we apply this method to deduce a similar result for the generalized Hilbert-Kunz function and answer a question of Dao and Smirnov \cite[Example 6.2]{DS13}.

\section{The generalized Hilbert-Kunz function in dimension $2$}

\begin{Lemma}\label{lenghtquotientlemma2}
 Let $R$ be a two-dimensional normal $K$-domain of positive characteristic $p$ with homogeneous maximal ideal $\mathfrak{m}$. We denote by  $U=\mathrm{Spec}R\setminus\{\mathfrak{m}\}$ the punctured spectrum.
 Let $M$ be a finitely generated graded $R$-module with a presentation 
 \begin{equation}\label{presentationofM}
 0\rightarrow I\rightarrow R^n\rightarrow M\rightarrow0.
  \end{equation}
 Let $J=I^{**}$ be the reflexive hull of $I$ (considered inside $R^n$) and $\mathcal{L}$ the coherent sheaf corresponding to $J$ on $U$, that is $\mathcal{L}=\widetilde{J}|_{U}$, then
 \begin{equation}\label{reductionequation}
 \begin{split}
  gHK(M,q)&=l_R\left(\Gamma(U,F^{e*}\mathcal{L})/\mathrm{im}F^{e*}I\right)\\
  &=l_R\left((F^{e*}J)^{**}/\mathrm{im}F^{e*}I\right),
 \end{split}
 \end{equation}
where $q=p^e$ and $\mathrm{im}F^{e*}I$ denotes the image of the map $F^{e*}I\rightarrow F^{e*}R^n\cong R^n$.
\end{Lemma}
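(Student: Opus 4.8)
The plan is to push the presentation (\ref{presentationofM}) through the Frobenius functor and then compute the zeroth local cohomology of the outcome by passing to a reflexive hull, using Lemma \ref{reflexivecohomologyzero} and the geometric description (\ref{geometriccharacterization}) of reflexive hulls.

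First I would apply $F^{e*}$ to (\ref{presentationofM}). Since $F^{e*}$ is right exact and $F^{e*}R^n\cong R^n$, this yields $F^{e*}M\cong R^n/N$, where $N:=\mathrm{im}(F^{e*}I\to R^n)=\mathrm{im}F^{e*}I$, so that $gHK(M,q)=l_R\!\left(H^0_{\mathfrak{m}}(R^n/N)\right)$. Writing $N^{**}$ for the reflexive hull of $N$ taken inside $R^n$, I would then use the short exact sequence $0\to N^{**}/N\to R^n/N\to R^n/N^{**}\to 0$: by Lemma \ref{reflexivecohomologyzero} the term $H^0_{\mathfrak{m}}(R^n/N^{**})$ vanishes, while by (\ref{geometriccharacterization}) the sheaves $\widetilde{N}|_U$ and $\widetilde{N^{**}}|_U$ agree, so $N^{**}/N$ is a finitely generated module supported only at $\mathfrak{m}$, hence of finite length, and thus $H^0_{\mathfrak{m}}(N^{**}/N)=N^{**}/N$. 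The long exact cohomology sequence then gives $H^0_{\mathfrak{m}}(R^n/N)\cong N^{**}/N$, i.e. $gHK(M,q)=l_R(N^{**}/N)$.

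It then remains to identify $N^{**}$ with the two modules on the right of (\ref{reductionequation}). By (\ref{geometriccharacterization}), $N^{**}=\Gamma(U,\widetilde{N}|_U)$. Since passing to the reflexive hull does not change the associated sheaf on $U$, we have $\widetilde{I}|_U=\widetilde{J}|_U=\mathcal{L}$, and because sheafification (and restriction to $U$) is exact, $\widetilde{N}|_U=\mathrm{im}(F^{e*}\mathcal{L}\to\mathcal{O}_U^n)$. Here I would check that this map is injective: every point of $U$ is the generic point or a height-one prime, where $R$ localizes to a field, respectively (by normality) to a discrete valuation ring; these local rings are regular, so the Frobenius is flat there (Kunz), $F^{e*}$ is exact at such points, and therefore it preserves the inclusion $\mathcal{L}\hookrightarrow\mathcal{O}_U^n$. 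Hence $\widetilde{N}|_U\cong F^{e*}\mathcal{L}$ and $N^{**}=\Gamma(U,F^{e*}\mathcal{L})$, the inclusion $N\hookrightarrow N^{**}$ being the tautological $\mathrm{im}F^{e*}I\hookrightarrow\Gamma(U,F^{e*}\mathcal{L})$; this is the first equality of (\ref{reductionequation}). For the second, (\ref{geometriccharacterization}) again gives $(F^{e*}J)^{**}=\Gamma(U,\widetilde{F^{e*}J}|_U)=\Gamma(U,F^{e*}(\widetilde{J}|_U))=\Gamma(U,F^{e*}\mathcal{L})=N^{**}$, and the map from $\mathrm{im}F^{e*}I$ is compatible since $F^{e*}I\to R^n$ factors through $F^{e*}I\to F^{e*}J$ and through $(F^{e*}J)^{**}\hookrightarrow R^n$.

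The step I expect to be the main obstacle is the injectivity of $F^{e*}\mathcal{L}\to\mathcal{O}_U^n$, equivalently the claim that the image of $F^{e*}I$ as a sheaf on $U$ is all of $F^{e*}\mathcal{L}$ rather than a proper quotient of it; once that is secured, everything else is bookkeeping with local cohomology and with the geometric characterization of reflexive hulls recalled in Section 1. If one preferred to sidestep the flatness input, one could instead note that the kernel of $F^{e*}J\to R^n$ is a torsion submodule and hence is annihilated by $(-)^{**}$, but checking exactness of $F^{e*}$ at the one-dimensional localizations is the most transparent route.
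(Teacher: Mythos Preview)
Your argument is correct and follows essentially the same route as the paper: apply $F^{e*}$ to the presentation, sandwich $\mathrm{im}F^{e*}I$ inside its reflexive hull inside $R^n$, use Lemma~\ref{reflexivecohomologyzero} to kill $H^0_{\mathfrak m}$ of the outer quotient, and identify the reflexive hull with $\Gamma(U,F^{e*}\mathcal L)=(F^{e*}J)^{**}$ via \eqref{geometriccharacterization} and the exactness of Frobenius on the regular scheme $U$. The only cosmetic difference is that you introduce $N^{**}$ first and identify it afterwards, whereas the paper works directly with $\Gamma(U,F^{e*}\mathcal L)$ from the start; the injectivity step you flag as the main obstacle is exactly what the paper handles in the explanatory paragraph preceding its proof by observing that $U$ is smooth.
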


Before proving the lemma, we explain the right hand side of the equality \eqref{reductionequation}. 
\par First of all, in virtue of \eqref{geometriccharacterization} we have $\Gamma(U,F^{e*}\mathcal{L})=(F^{e*}J)^{**}$, so the second equality is clear. Then the inclusion $I\hookrightarrow R^n$ factors through the reflexive module $J$. Applying the Frobenius functor to these maps we get a commutative diagram
\begin{equation}\label{frobeniuscommutative}
   \begin{tikzcd}
&F^{e*}I \arrow{r}\arrow{d}
&F^{e*}R^n\cong R^n \\
&F^{e*}J\arrow{ru}.
\end{tikzcd}
\end{equation}
Since the functor $F^{e*}$ is not left exact in general, the maps in \eqref{frobeniuscommutative} are not injective, for this reason we consider the image $\mathrm{im}F^{e*}I\subseteq{R}^n$.
\par Since $R$ is normal, $U$ is smooth and the absolute Frobenius morphism $F^{e}:U\rightarrow U$ is exact on $U$. So we pullback along $F^{e}$ the inclusion $\mathcal{L}\hookrightarrow\mathcal{O}^n_U$ and we take sections on $U$ obtaining the inclusion
\begin{equation*}
 \Gamma(U,F^{e*}\mathcal{L})\hookrightarrow \Gamma(U,F^{e*}\mathcal{O}^n_U)\cong \Gamma(U,\mathcal{O}^n_U)=R^n.
\end{equation*}
Therefore the quotient $\Gamma(U,F^{e*}\mathcal{L})/\mathrm{im}F^{e*}I$ is a quotient of submodules of $R^n$.

\begin{proof}
 We apply the functor $F^{e*}$ to the short exact sequence \eqref{presentationofM} and we get $F^{e*}I\rightarrow R^n\rightarrow F^{e*}M\rightarrow0$. Therefore we have
 \begin{equation}\label{frobeniusequality1}
  F^{e*}M=R^n/\mathrm{im}F^{e*}I.
 \end{equation}
Then we consider the short exact sequence
\begin{equation*}
 0\rightarrow \Gamma(U,F^{e*}\mathcal{L})/\mathrm{im}F^{e*}I\rightarrow R^n/\mathrm{im}F^{e*}I \rightarrow R^n/\Gamma(U,F^{e*}\mathcal{L})\rightarrow0.
\end{equation*}
Taking local cohomology yields
\begin{equation*}
 0\rightarrow H_{\mathfrak{m}}^0\left(\Gamma(U,F^{e*}\mathcal{L})/\mathrm{im}F^{e*}I\right)\rightarrow H_{\mathfrak{m}}^0\left(R^n/\mathrm{im}F^{e*}I\right)\rightarrow H_{\mathfrak{m}}^0\left(R^n/\Gamma(U,F^{e*}\mathcal{L})\right).
\end{equation*}
The module $\Gamma(U,F^{e*}\mathcal{L})$ is reflexive by \eqref{geometriccharacterization}, then by Lemma \ref{reflexivecohomologyzero} the last module of the previous sequence is $0$. So we get the following isomorphism
\begin{equation}\label{frobeniusequality2}
\begin{split}
H_{\mathfrak{m}}^0\left(R^n/\mathrm{im}F^{e*}I\right)&\cong  H_{\mathfrak{m}}^0\left(\Gamma(U,F^{e*}\mathcal{L})/\mathrm{im}F^{e*}I\right)\\
&= \Gamma(U,F^{e*}\mathcal{L})/\mathrm{im}F^{e*}I.
\end{split}
\end{equation}
The last equality holds because the module $\Gamma(U,F^{e*}\mathcal{L})/\mathrm{im}F^{e*}I$ has support in $\mathfrak{m}$, since the sheaves  $\mathcal{L}$ and $\widetilde{I}$ coincide on $U$. Then the desired formula follows from \eqref{frobeniusequality1} and \eqref{frobeniusequality2}.
\end{proof}

\begin{Theo}\label{TheoremgHKgradedcase}
 Let $R$ be a two-dimensional normal standard graded $K$-domain over an algebraically closed field $K$ of prime characteristic $p$ and let $M$ be a finitely generated graded $R$-module.
  Then the generalized Hilbert-Kunz function of $M$ has the form
 \begin{equation*}
  gHK(M,q)=e_{gHK}(M)q^2+\gamma(q),
 \end{equation*}
where  $e_{gHK}(M)$ is a rational number and $\gamma(q)$ is a bounded function.
\par Moreover if $K$ is the algebraic closure of a finite field, then $\gamma(q)$ is an eventually periodic function. In particular, given a graded presentation of $M$
 \begin{equation*}
\bigoplus_{i=1}^nR(-d_i)\xrightarrow{\psi}\bigoplus_{j=1}^mR(-e_j)\rightarrow M\rightarrow0  
 \end{equation*}
 and the corresponding short exact sequence of locally free sheaves on the curve $Y=\mathrm{Proj}R$
 \begin{equation}\label{syzygysequence2}
  0\rightarrow\mathcal{S}:=\widetilde{\mathrm{ker}{\psi}}\rightarrow\mathcal{T}:=\bigoplus_{i=1}^n\mathcal{O}_Y(-d_i)\rightarrow\mathcal{Q}:=\widetilde{\mathrm{im}\psi}\rightarrow0,
 \end{equation}
then the generalized Hilbert-Kunz multiplicity of $M$ is
\begin{equation*}
 e_{gHK}(M)=\frac{1}{2\deg Y}\left(\mu_{HK}(\mathcal{S})-(\deg Y)^2\sum_{i=1}^nd_i^2+\mu_{HK}(\mathcal{Q})\right).
\end{equation*}
\end{Theo}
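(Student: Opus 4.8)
The plan is to express $gHK(M,q)$ as the alternating sum of dimensions of global sections of the Frobenius pullbacks of the three bundles in \eqref{syzygysequence2}, and then quote Theorem~\ref{theoremalternatingsum} and Example~\ref{slopesum}. First I would fix a finite graded free presentation $\bigoplus_{i=1}^{n}R(-d_i)\xrightarrow{\psi}\bigoplus_{j=1}^{m}R(-e_j)\to M\to 0$, set $N:=\bigoplus_{j}R(-e_j)$, $N':=\bigoplus_i R(-d_i)$, and $I:=\mathrm{im}\,\psi=\ker(N\to M)$. Sheafifying $\psi$ on $Y$ produces the sequence \eqref{syzygysequence2} of locally free sheaves: $\mathcal{Q}=\widetilde{\mathrm{im}\,\psi}$ is torsion-free as a coherent subsheaf of the bundle $\bigoplus\mathcal{O}_Y(-e_j)$ on the smooth curve $Y$, hence locally free, and $\mathcal{S}=\widetilde{\ker\psi}$ is locally free as a subsheaf of $\mathcal{T}$. (Here $Y$ is a smooth projective curve because $R$ is a standard graded normal two-dimensional domain over the algebraically closed field $K$.) Lemma~\ref{lenghtquotientlemma2}, applied to $0\to I\to N\to M\to 0$, then gives with $\mathcal{L}=\widetilde{I}|_U$ the identity
\begin{equation*}
 gHK(M,q)=l_R\bigl(\Gamma(U,F^{e*}\mathcal{L})/\mathrm{im}\,F^{e*}I\bigr).
\end{equation*}

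Next I would transfer this to $Y$ via the canonical morphism $\pi\colon U\to Y$, which for standard graded $R$ realizes $U$ as a $\mathbb{G}_m$-bundle over $Y$ with $\pi_{*}\mathcal{O}_U=\bigoplus_{t\in\mathbb{Z}}\mathcal{O}_Y(t)$, so that $\Gamma(U,\pi^{*}\mathcal{G})=\bigoplus_{t}\Gamma(Y,\mathcal{G}(t))$ for every coherent $\mathcal{G}$ on $Y$. Since the absolute Frobenius of $U$ lies over that of $Y$ and $\widetilde{I}|_U=\pi^{*}\mathcal{Q}$, we get $F^{e*}\mathcal{L}=\pi^{*}(F^{e*}\mathcal{Q})$, hence $\Gamma(U,F^{e*}\mathcal{L})=\bigoplus_{t}\Gamma(Y,F^{e*}\mathcal{Q}(t))$, and applied to the free module $N'$ also $F^{e*}N'=\bigoplus_{t}\Gamma(Y,F^{e*}\mathcal{T}(t))$. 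Because $F^{e*}$ is right exact and $N'\twoheadrightarrow I$, the submodule $\mathrm{im}\,F^{e*}I$ of $F^{e*}N$ equals $\mathrm{im}\,F^{e*}\psi$, and, using that $\psi$ factors through the reflexive hull $I^{**}$, it coincides with the image of the map $F^{e*}N'\to\Gamma(U,F^{e*}\mathcal{L})$, which in degree $t$ is $\Gamma(Y,F^{e*}\mathcal{T}(t))\to\Gamma(Y,F^{e*}\mathcal{Q}(t))$, induced by $F^{e*}\mathcal{T}\to F^{e*}\mathcal{Q}$. As $gHK(M,q)$ is finite, $l_R=\dim_K$ on the quotient, so
\begin{equation*}
 gHK(M,q)=\sum_{t\in\mathbb{Z}}\dim_K\mathrm{coker}\bigl(\Gamma(Y,F^{e*}\mathcal{T}(t))\to\Gamma(Y,F^{e*}\mathcal{Q}(t))\bigr),
\end{equation*}
a finite sum (no sections for $t\ll 0$, and $H^1(Y,F^{e*}\mathcal{S}(t))=0$ for $t\gg 0$).

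To finish, I would pull back \eqref{syzygysequence2} along $F^{e}$: since $\mathcal{S}$ is locally free the sequence stays exact, and twisting by $\mathcal{O}_Y(t)$ and taking the long exact cohomology sequence gives
\begin{equation*}
 \dim\mathrm{coker}\bigl(\Gamma(F^{e*}\mathcal{T}(t))\to\Gamma(F^{e*}\mathcal{Q}(t))\bigr)=h^0(F^{e*}\mathcal{S}(t))-h^0(F^{e*}\mathcal{T}(t))+h^0(F^{e*}\mathcal{Q}(t)).
\end{equation*}
Summing over $t$ identifies $gHK(M,q)$ with the alternating sum of Theorem~\ref{theoremalternatingsum}, so
\begin{equation*}
 gHK(M,q)=\frac{q^2}{2\deg Y}\bigl(\mu_{HK}(\mathcal{S})-\mu_{HK}(\mathcal{T})+\mu_{HK}(\mathcal{Q})\bigr)+O(q^0),
\end{equation*}
with the $O(q^0)$-term eventually periodic when $K$ is the algebraic closure of a finite field. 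Since $\mu_{HK}(\mathcal{T})=(\deg Y)^2\sum_{i=1}^{n}d_i^2$ by Example~\ref{slopesum}, this yields the claimed formula for $e_{gHK}(M)$; it is rational because $\mu_{HK}(\mathcal{S})$ and $\mu_{HK}(\mathcal{Q})$ are rational and $\deg Y$ is a positive integer, and then $\gamma(q):=gHK(M,q)-e_{gHK}(M)q^2$ is bounded, and eventually periodic in the finite-field case.

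The step I expect to be the main obstacle is the middle one: reconciling the non-exact Frobenius functor on $R$-modules with the exact Frobenius on the smooth schemes $U$ and $Y$, i.e.\ checking that $\mathrm{im}\,F^{e*}I$, computed inside $F^{e*}N$, is exactly the degreewise image $\bigoplus_t\mathrm{im}(\Gamma(Y,F^{e*}\mathcal{T}(t))\to\Gamma(Y,F^{e*}\mathcal{Q}(t)))$. This requires care with the grading and twisting conventions under Frobenius, with the factorization of $\psi$ through $I^{**}$ (which is what makes $\Gamma(U,F^{e*}\mathcal{L})$ the right ambient module, via $\widetilde{I}|_U=\widetilde{I^{**}}|_U$), and with the fact that on $U$ the relevant sheaf maps are already surjective, so that taking sections introduces no spurious $H^1$. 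Granting this bookkeeping, the statement follows formally from Lemma~\ref{lenghtquotientlemma2}, Theorem~\ref{theoremalternatingsum}, and Example~\ref{slopesum}.
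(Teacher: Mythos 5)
Your proposal is correct and follows essentially the same route as the paper: reduce to $l_R\bigl(\Gamma(U,F^{e*}\mathcal{L})/\mathrm{im}\,F^{e*}I\bigr)$ via Lemma~\ref{lenghtquotientlemma2}, identify the graded pieces with cokernels of $\Gamma(Y,F^{e*}\mathcal{T}(m))\to\Gamma(Y,F^{e*}\mathcal{Q}(m))$, and apply Theorem~\ref{theoremalternatingsum} together with Example~\ref{slopesum}. The ``main obstacle'' you flag (matching $\mathrm{im}\,F^{e*}I$ with the degreewise images on $Y$) is handled in the paper exactly as you suggest, via the right-exactness of $F^{e*}$ applied to $\bigoplus_i R(-d_i)\twoheadrightarrow I$ and the identification $\Gamma(Y,F^{e*}\mathcal{Q}(m))\cong\Gamma(U,F^{e*}\mathcal{L})_m$.
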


\begin{proof}
Let $u_1,\dots,u_m$ be homogeneous generators of $M$ of degrees $e_1,\dots,e_m$ respectively and let
\begin{equation*}
 0\rightarrow I\rightarrow\bigoplus_{j=1}^mR(-e_j)\xrightarrow{u_1,\dots,u_m} M\rightarrow0
\end{equation*}
be the corresponding short exact sequence.  Let $f_1,\dots, f_n$ be homogeneous generators of $I$ of degrees $d_1,\dots,d_n$ respectively and let
\begin{equation*}
 0\rightarrow N\rightarrow\bigoplus_{i=1}^{n}R(-d_i)\xrightarrow{f_1,\dots,f_n}I\rightarrow0
\end{equation*}
be the corresponding graded short exact sequence. This last sequence induces the short exact sequence  \eqref{syzygysequence2} on $Y$, and the short exact sequence
\begin{equation}\label{syzygysequence3}
 0\rightarrow \mathcal{E}:=\widetilde{N}|_{U}\rightarrow\mathcal{F}:=\bigoplus_{i=1}^n\mathcal{O}_U(-d_i)\rightarrow\widetilde{I}|_U\rightarrow0
\end{equation}
on the punctured spectrum $U$.
The modules $N$ and $I$ are submodules of finite free $R$-modules, so they are torsion-free. It follows that the corresponding sheaves $\mathcal{E}$ and $\widetilde{I}|_U$ on $U$ are locally free, since $U$ is regular.
Moreover if $J=I^{**}$ is the reflexive hull of $I$ and $\mathcal{L}$ the coherent sheaf corresponding to $J$ on $U$, we have that $\mathcal{L}=\widetilde{I}|_U$ as sheaves on $U$. 
\par From Lemma \ref{lenghtquotientlemma2}, the generalized Hilbert-Kunz function of $M$ is given by
 \begin{equation*}
  gHK(M,q)=l_R\left(\Gamma(U,F^{e*}\mathcal{L})/\mathrm{im}F^{e}I\right)=\sum_{m\in\mathbb{Z}}l_R\left(\left(\Gamma(U,F^{e*}\mathcal{L})/\mathrm{im}F^{e}I\right)_m\right).
 \end{equation*}

 \par To compute the last sum, we  consider the sequence \eqref{syzygysequence2}, we pull it back along the $e$-th absolute Frobenius morphism on $Y$ and we tensor with $\mathcal{O}_Y(m)$, for an integer $m$. We obtain an exact sequence
\begin{equation*}
 0\rightarrow F^{e*}\mathcal{S}(m)\rightarrow F^{e*}\mathcal{T}(m)\rightarrow F^{e*}\mathcal{Q}(m)\rightarrow0.
\end{equation*}
Then we take global sections $\Gamma(Y,-)$ of the last sequence and we get
\begin{equation*}
 0\rightarrow \Gamma(Y,F^{e*}\mathcal{S}(m))\rightarrow\Gamma(Y,F^{e*}\mathcal{T}(m))\xrightarrow{\varphi_m}\Gamma(Y,F^{e*}\mathcal{Q}(m))\rightarrow\dots
\end{equation*}
We are interested in the cokernel of the map $\varphi_m$. Its image is clearly $(\mathrm{im}F^{e}I)_m$. For the evaluation of the sheaf $F^{e*}\mathcal{Q}(m)$ on $Y$, we consider the sequences \eqref{syzygysequence2} and \eqref{syzygysequence3}, we obtain
\begin{equation*}
 \Gamma(Y,F^{e*}\mathcal{Q}(m))\cong\Gamma(U,F^{e*}\mathcal{L})_m.
\end{equation*}
So we get  $\mathrm{Coker}(\varphi_m)=\Gamma(U,F^{e*}\mathcal{L})_m/(\mathrm{im}F^{e}I)_m=\left(\Gamma(U,F^{e*}\mathcal{L})/\mathrm{im}F^{e}I\right)_m$. It follows that 
\begin{equation*}
gHK(M,q)=\sum_{m\in\mathbb{Z}}\mathrm{Coker}(\varphi_m).
\end{equation*}
We compute the last sum with Theorem \ref{theoremalternatingsum} and we obtain the desired formula for the generalized Hilbert-Kunz function.
\par For the generalized Hilbert-Kunz multiplicity, it is enough to notice that $\mu_{HK}(\mathcal{T})=(\deg Y)^2\sum_{i=1}^nd_i^2$ by Example \ref{slopesum}.
\end{proof}

\begin{Cor}\label{corollaryHKideal}
 Let $I$ be a non-zero ideal generated by homogeneous elements $f_1,\dots,f_n$ of degrees $d_1,\dots,d_n$ respectively, and let $d$ be the degree of the ideal sheaf associated to $I$ on $Y=\mathrm{Proj} R$.
 Then the generalized Hilbert-Kunz multiplicity of $R/I$ is given by
  \begin{equation*}
 e_{gHK}(R/I)=\frac{1}{2\deg Y}\left(\sum_{k=1}^tr_k\bar{\mu}_k^2-(\deg Y)^2\sum_{i=1}^nd_i^2+d^2\right),
\end{equation*}
where $r_k$,  $\bar{\mu}_k$ and $t$ are the numerical invariants of the strong Harder-Narasimham filtration of the syzygy bundle $\mathrm{Syz}(f_1,\dots,f_n)$.
\end{Cor}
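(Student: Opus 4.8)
The plan is to obtain this as a direct specialization of Theorem \ref{TheoremgHKgradedcase}. First I would apply that theorem to the module $M=R/I$, using the tautological graded presentation
\[
\bigoplus_{i=1}^{n}R(-d_i)\xrightarrow{\;f_1,\dots,f_n\;}R\longrightarrow R/I\longrightarrow 0,
\]
so that, in the notation of the theorem, one has $m=1$, $e_1=0$, $\psi=(f_1,\dots,f_n)$, $\mathcal{T}=\bigoplus_{i=1}^{n}\mathcal{O}_Y(-d_i)$, $\mathcal{S}=\widetilde{\mathrm{ker}\,\psi}=\mathrm{Syz}(f_1,\dots,f_n)$, and $\mathcal{Q}=\widetilde{\mathrm{im}\,\psi}=\widetilde{I}$, the ideal sheaf of $I$ on $Y=\mathrm{Proj}\,R$ (sheafification on $\mathrm{Proj}$ being exact). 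Theorem \ref{TheoremgHKgradedcase} then gives
\[
e_{gHK}(R/I)=\frac{1}{2\deg Y}\left(\mu_{HK}(\mathcal{S})-(\deg Y)^2\sum_{i=1}^{n}d_i^2+\mu_{HK}(\mathcal{Q})\right),
\]
and it remains only to rewrite the two Hilbert-Kunz slope terms in the asserted form.

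For $\mathcal{S}$ there is nothing to do beyond unwinding the definition of the Hilbert-Kunz slope: $\mu_{HK}(\mathcal{S})=\sum_{k=1}^{t}r_k\bar{\mu}_k^2$, where $r_k$, $\bar{\mu}_k$, $t$ are precisely the numerical invariants of the strong Harder-Narasimham filtration of the syzygy bundle $\mathrm{Syz}(f_1,\dots,f_n)$. For $\mathcal{Q}=\widetilde{I}$ I would first observe that, since $I\neq 0$, sheafifying the inclusion $I\hookrightarrow R$ realizes $\widetilde{I}$ as a rank-one subsheaf of $\mathcal{O}_Y$ whose cokernel $\widetilde{R/I}$ is a torsion sheaf supported at finitely many points of the curve; hence $\widetilde{I}$ is torsion-free of rank one, and since $Y$ is a smooth curve it is an invertible sheaf, of degree $d$ by hypothesis. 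Example \ref{slopelinebundle} then yields $\mu_{HK}(\mathcal{Q})=(\deg\widetilde{I})^2=d^2$. Substituting both expressions into the displayed formula produces exactly the claimed identity.

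I do not expect any genuine obstacle here: essentially all of the substance is already contained in Theorem \ref{TheoremgHKgradedcase}, and the only point that requires a word of justification is that the ideal sheaf $\widetilde{I}$ is locally free on the smooth curve $Y$, so that Example \ref{slopelinebundle} applies --- and this is immediate from its rank-one torsion-freeness. One should only take minor care that the number $d$ in the statement is $\deg\widetilde{I}$, which is in fact $\leq 0$ (the negative of the length of the torsion quotient $\widetilde{R/I}$); but since only $d^2$ enters the formula, its sign is irrelevant.
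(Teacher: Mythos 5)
Your argument is correct and coincides with the paper's own proof: both specialize Theorem \ref{TheoremgHKgradedcase} to the presentation $\bigoplus_{i=1}^{n}R(-d_i)\rightarrow R\rightarrow R/I\rightarrow 0$, identify $\mathcal{S}$ with $\mathrm{Syz}(f_1,\dots,f_n)$ and $\mathcal{Q}$ with the ideal sheaf, and invoke Example \ref{slopelinebundle} for $\mu_{HK}(\mathcal{Q})=d^2$. Your extra remarks (why $\widetilde{I}$ is invertible on the smooth curve, and that the sign of $d$ is irrelevant) are sound but not needed beyond what the paper already says.
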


\begin{proof}
In this case the presenting sequence of $R/I$ is just $0\rightarrow I\rightarrow R\rightarrow R/I\rightarrow 0$ and the sequence \eqref{syzygysequence2} is then
\begin{equation*}
0\rightarrow\mathrm{Syz}(f_1,\dots,f_n)\rightarrow\bigoplus_{i=1}^n\mathcal{O}_Y(-d_i)\xrightarrow{f_1,\dots,f_n}\mathcal{Q}\rightarrow0.
 \end{equation*}
So by Theorem \ref{TheoremgHKgradedcase}, the generalized Hilbert-Kunz multiplicity of $R/I$ is given by 
\begin{equation*}
 \frac{1}{2\deg Y}\left(\mu_{HK}(\mathrm{Syz}(f_1,\dots,f_n))-(\deg Y)^2\sum_{i=1}^nd_i^2+\mu_{HK}(\mathcal{Q})\right).
\end{equation*}
In this situation $\mathcal{Q}$ is a line bundle, so by Example \ref{slopelinebundle}, $\mu_{HK}(\mathcal{Q})=(\deg \mathcal{Q})^2=d^2$, and by definition $\mu_{HK}(\mathrm{Syz}(f_1,\dots,f_n))=\sum_{k=1}^tr_k\bar{\mu}_k^2$.
\end{proof}

\begin{Ex}\label{exampleprincipalideal}
 Let $h$ be a homogeneous element of degree $a>0$, and let $I=(h)$. Then we have $0\rightarrow \mathcal{O}_Y(-a)\xrightarrow{\simeq}\mathcal{L}\rightarrow0$, hence $\mathrm{Syz}(h)=0$. Since $I$ is principal, the degree of the ideal sheaf associated to $I$ is $a\cdot\deg Y$. The generalized Hilbert-Kunz multiplicity is then
 \begin{equation*}
  e_{gHK}(R/I)=\frac{1}{2\deg Y}\left(-(\deg Y)^2a^2+(\deg Y)^2a^2\right)=0,
 \end{equation*}
 in accordance with  Proposition \ref{reflexiveproposition}.
\end{Ex}

\begin{Ex}\label{exampleideal2elements}
 Let $I$ be a prime ideal of height one generated by two homogeneous elements $f$ and $g$ of degrees $a$ and $b$ respectively. Then the syzygy sequence is
 \begin{equation*}
  0\rightarrow \mathrm{Syz}(f,g)\rightarrow \mathcal{O}_Y(-a)\oplus\mathcal{O}_Y(-b)\rightarrow\mathcal{Q}\rightarrow0,
 \end{equation*}
with $\mathcal{Q}$ the line bundle of say degree $d$ associated to the ideal $I$. From this sequence we see that the syzygy bundle has rank one and degree $\deg Y(-a-b)-d$. Therefore we have
\begin{equation*}
\begin{split}
e_{gHK}(R/I)&=\frac{1}{2\deg Y}\left(\left(\deg Y(-a-b)-d\right)^2-(\deg Y)^2(a^2+b^2)+d^2\right)\\ 
&= \frac{1}{2\deg Y}\Big(2d^2+(\deg Y)^2(a^2+b^2)+2ab(\deg Y)^2+2d\deg Y(a+b)\\
&-(\deg Y)^2(a^2+b^2)\Big)\\
&=\frac{1}{\deg Y}\left(d^2+ab(\deg Y)^2+d(a+b)\deg Y\right)\\
&=\frac{d^2}{\deg Y}+ab\deg Y+d(a+b).
\end{split}
\end{equation*}
\end{Ex}

\begin{Ex}\label{exampleidealofapoint}
Let $P$ be a point of the smooth projective curve $Y=\mathrm{Proj}R$, and let $I\subseteq R$ be the corresponding homogeneous prime ideal of height one.
The ideal $I$ is minimally generated by two linear forms $f$ and $g$.
In fact, $f$ and $g$ correspond to two hyperplanes in the projective space where $Y$ is embedded which meet transversally in $P$.
Then, using the notations of Example \ref{exampleideal2elements} we have $a=b=1$, and $d=-1$, since the line bundle $\mathcal{Q}$ associated to the ideal $I$ is a subsheaf of $\mathcal{O}_Y$. So we obtain
\begin{equation*}
e_{gHK}(R/I)=\frac{1}{\deg Y}+\deg Y-2=\frac{(\deg Y-1)^2}{\deg Y}.
\end{equation*}
\end{Ex}

\section{The limit of generalized Hilbert-Kunz multiplicity}
Let $R$ be a standard graded domain flat over $\mathbb{Z}$ such that almost all fiber rings $R_p=R\otimes_{\mathbb{Z}}\mathbb{Z}/p\mathbb{Z}$ are geometrically normal two-dimensional domains. 
We define $R_0:=R\otimes_{\mathbb{Z}}\mathbb{Q}$ and the corresponding projective curve $Y_0:=\mathrm{Proj}R_0$ over the generic point. We denote by $Y_p:=\mathrm{Proj}R_p$, the projective curve over the prime number $p$. This is a smooth projective curve for almost all primes. If $\mathcal{S}$ is a sheaf over the curve $Y:=\mathrm{Proj}R$ we will denote by $\mathcal{S}_p$ (respectively $\mathcal{S}_0$) the corresponding restriction to the curves $Y_p$ (resp. $Y_0$).

\begin{Rem}
 In our setting the curves $Y_0$ and $Y_p$ are not defined over an algebraically closed field. However, we may consider the curves $\overline{Y}_0:=Y_0\times_{\mathbb{Q}}\overline{\mathbb{Q}}$ and $\overline{Y}_p:=Y_p\times_{\mathbb{Z}/p\mathbb{Z}}\overline{\mathbb{Z}/p\mathbb{Z}}$, which are smooth projective curves over the algebraic closures.
 We can consider the definitions of degree, slope, semistable, HN filtration and strong HN filtration for those curves and transfer them to the original curves $Y_0$ and $Y_p$. Therefore we will move to the algebraic closure and back whenever this is convenient.
 \end{Rem}

\par Let $M$ be a graded $R$-module. For every prime $p$ we can consider the reduction to characteristic $p$, $M_p:=M\otimes_RR_p\cong M\otimes_{\mathbb{Z}}\mathbb{Z}/p\mathbb{Z}$, and compute the generalized Hilbert-Kunz multiplicity $e_{gHK}^{R_p}(M_p)$ of the $R_p$-module $M_p$.
Since the projective curve $Y_p$ is smooth for almost all primes $p$,  by Theorem \ref{TheoremgHKgradedcase} we know that $e_{gHK}^{R_p}(M_p)$ exists and that it is rational for these primes. We are interested in the behaviour of $e_{gHK}^{R_p}(M_p)$ for $p\rightarrow+\infty$.

\par We introduce the following characteristic zero version of the Hilbert-Kunz slope.

\begin{Def}
 Let $\mathcal{S}$ be a locally free sheaf over a projective curve over an algebraically closed field of characteristic zero and let $\mathcal{S}_1\subseteq\dots\subseteq\mathcal{S}_t=\mathcal{S}$  be the Harder-Narasimham filtration of $\mathcal{S}$. For every $k=1,\dots,t$ we set $\bar{\mu}_k=\bar{\mu}_k(\mathcal{S})=\mu(\mathcal{S}_{k}/\mathcal{S}_{k-1})$ and $r_k=\mathrm{rank}(\mathcal{S}_{k}/\mathcal{S}_{k-1})$.
 The \emph{Hilbert-Kunz slope} of $\mathcal{S}$ is the rational number
 \begin{equation*}
  \mu_{HK}(\mathcal{S})=\sum_{k=1}^tr_k\bar{\mu}_k^2.
 \end{equation*}
\end{Def}

The name Hilbert-Kunz slope is justified by the following result of Trivedi (cf. \cite[Lemma 1.14]{Tri07}). 

\begin{Lemma}[Trivedi]\label{trivedilemma}
Let $h\in\mathbb{Z}_+$, let $Y$ be a smooth projective curve over $\mathrm{Spec}\mathbb{Z}_h$, and let $\mathcal{S}$ be a locally free sheaf over $Y$. We denote by $\mathcal{S}_0$ and $\mathcal{S}_p$ the restrictions of $\mathcal{S}$ to $Y_0$ and to $Y_p$, for $p\nmid h$. Then
\begin{equation*}
\lim_{\substack{p\rightarrow+\infty\\ p\nmid h}}\mu_{HK}(\mathcal{S}_p)=\mu_{HK}(\mathcal{S}_0). 
\end{equation*}
\end{Lemma}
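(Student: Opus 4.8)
The plan is to reduce the statement to the semicontinuity behaviour of the Harder-Narasimham filtration under reduction mod $p$, for which the key input is the existence of a \emph{relative Harder-Narasimham filtration} over an open dense subset of $\mathrm{Spec}\mathbb{Z}_h$. First I would recall that by Langer's theorem (used already in Section 2) the strong HN filtration of $\mathcal{S}_p$ is obtained by pulling back the ordinary HN filtration of $F^{e_0*}\mathcal{S}_p$ along Frobenius for $e\geq e_0$, and that the numbers $\bar\mu_k(\mathcal{S}_p)$, $r_k(\mathcal{S}_p)$ stabilise for $e\gg 0$; so $\mu_{HK}(\mathcal{S}_p)=\sum_k r_k(\mathcal{S}_p)\bar\mu_k(\mathcal{S}_p)^2$ is computed entirely from these limiting invariants. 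On the characteristic zero side $\mu_{HK}(\mathcal{S}_0)$ is built directly from the ordinary HN filtration of $\mathcal{S}_0$, since in characteristic zero semistability is preserved by Frobenius twists (there is no Frobenius) and the strong and ordinary filtrations coincide.

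The main step is to show that, after possibly enlarging $h$, the HN filtration of $\mathcal{S}_0$ spreads out to a filtration $\mathcal{S}^{(1)}\subseteq\cdots\subseteq\mathcal{S}^{(t)}=\mathcal{S}$ by subbundles over $Y\to\mathrm{Spec}\mathbb{Z}_h$ whose fibrewise quotients $\mathcal{S}^{(k)}_p/\mathcal{S}^{(k-1)}_p$ are semistable of the correct slope for almost all $p$, i.e.\ that this fibre filtration \emph{is} the HN filtration of $\mathcal{S}_p$ for $p\gg 0$. Semistability is an open condition on the base (this is a standard spreading-out/openness result; cf.\ Langer, or the boundedness of the family of semistable bundles of fixed numerical type), and the slopes $\mu(\mathcal{S}^{(k)}_p/\mathcal{S}^{(k-1)}_p)=\mu(\mathcal{S}^{(k)}_0/\mathcal{S}^{(k-1)}_0)$ are locally constant because degree and rank are locally constant in a flat family over an integral base. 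Once the fibre filtration has semistable quotients with strictly decreasing slopes it must agree with the HN filtration of $\mathcal{S}_p$ by uniqueness. This already gives $\bar\mu_k(\mathcal{S}_p)\to\bar\mu_k(\mathcal{S}_0)$ and $r_k(\mathcal{S}_p)=r_k(\mathcal{S}_0)$ \emph{provided} the strong HN filtration of $\mathcal{S}_p$ equals its ordinary HN filtration for large $p$; the remaining point is exactly this.

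So the heart of the argument, and the step I expect to be the main obstacle, is to bound the Frobenius-instability uniformly in $p$: one needs that for $p\gg 0$ the semistable quotients $\mathcal{S}^{(k)}_p/\mathcal{S}^{(k-1)}_p$ remain semistable after all Frobenius pullbacks, so that $e_0$ may be taken to be $0$ and $\bar\mu_k(\mathcal{S}_p)$ coincides with the ordinary slope $\mu(\mathcal{S}^{(k)}_p/\mathcal{S}^{(k-1)}_p)$. Here I would invoke the quantitative form of Langer's results: the difference between the HN slopes of $\mathcal{S}_p$ and of $F^{e*}\mathcal{S}_p/p^e$ is controlled by an expression in the rank and in $1/p$ (roughly of size $O(1/p)$, coming from Langer's bound involving the $p$-th power map and the genus of $Y_p$), so as $p\to\infty$ this correction tends to $0$. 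Combining: $r_k(\mathcal{S}_p)=r_k(\mathcal{S}_0)$ for $p\gg 0$, and $\bar\mu_k(\mathcal{S}_p)=\mu(\mathcal{S}^{(k)}_0/\mathcal{S}^{(k-1)}_0)+O(1/p)=\bar\mu_k(\mathcal{S}_0)+O(1/p)$. Squaring, multiplying by $r_k$, and summing over the finitely many $k$ gives
\begin{equation*}
\mu_{HK}(\mathcal{S}_p)=\sum_{k=1}^{t} r_k(\mathcal{S}_p)\,\bar\mu_k(\mathcal{S}_p)^2=\sum_{k=1}^{t} r_k(\mathcal{S}_0)\,\bar\mu_k(\mathcal{S}_0)^2+O(1/p)\longrightarrow \mu_{HK}(\mathcal{S}_0),
\end{equation*}
which is the claim. (One small bookkeeping point: the genus of $Y_p$ is constant for $p\gg 0$ and equals the genus of $Y_0$, which is needed to make Langer's bound uniform; this follows from flatness and properness.) Since this lemma is attributed to Trivedi, I would ultimately defer the delicate uniform-instability estimate to \cite{Tri07}, treating the spreading-out of the HN filtration as the conceptual core and Langer's bound as the technical engine.
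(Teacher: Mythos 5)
The paper does not prove this lemma at all: it is quoted directly as Lemma~1.14 of \cite{Tri07}, so there is no internal proof to compare yours against. That said, your sketch is a faithful reconstruction of the argument that actually underlies Trivedi's result: spread out the Harder--Narasimham filtration of $\mathcal{S}_0$ over an open dense part of $\mathrm{Spec}\mathbb{Z}_h$ using openness of semistability and local constancy of rank and degree in a flat family, so that for $p\gg0$ the \emph{ordinary} HN data of $\mathcal{S}_p$ agree with those of $\mathcal{S}_0$; then control the gap between the ordinary and the \emph{strong} HN filtration by the Shepherd-Barron/Langer instability bound, whose normalized size is $O(1/p)$ with a constant depending only on the rank and the genus (constant for $p\gg0$ by flatness and properness, as you note).

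One caveat on an intermediate assertion: the claim that for $p\gg0$ the quotients $\mathcal{S}^{(k)}_p/\mathcal{S}^{(k-1)}_p$ ``remain semistable after all Frobenius pullbacks, so that $e_0$ may be taken to be $0$'' is both unprovable by these methods --- whether semistability over the generic fibre forces strong semistability of the reduction for almost all $p$ is a well-known open problem --- and unnecessary. For the same reason, the equality $r_k(\mathcal{S}_p)=r_k(\mathcal{S}_0)$ for the \emph{strong} filtration invariants need not hold: the strong HN filtration of $\mathcal{S}_p$ may properly refine the ordinary one for every $p$. What is true, and what you correctly fall back on in the next sentence, is that every normalized slope occurring in the strong HN filtration of $\mathcal{S}_p$ lies within $O(1/p)$ of one of the $\bar\mu_k(\mathcal{S}_0)$, with the ranks of the refined pieces clustering around $\bar\mu_k(\mathcal{S}_0)$ summing to $r_k(\mathcal{S}_0)$; this already gives $\mu_{HK}(\mathcal{S}_p)=\sum_k r_k(\mathcal{S}_0)\bigl(\bar\mu_k(\mathcal{S}_0)+O(1/p)\bigr)^2=\mu_{HK}(\mathcal{S}_0)+O(1/p)$. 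With that correction your outline is essentially Trivedi's proof, and, like the paper, you are in the end deferring the quantitative instability estimate to \cite{Tri07} and \cite{Lan04}.
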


\begin{Theo}\label{limitHKtheorem}
Let $R$ be a standard graded domain flat over $\mathbb{Z}$ such that almost all fiber rings $R_p=R\otimes_{\mathbb{Z}}\mathbb{Z}/p\mathbb{Z}$ are geometrically normal two-dimensional domains. Let $M$ be a graded $R$-module with a graded presentation
\begin{equation*}
 \bigoplus_{i=1}^nR(-d_i)\xrightarrow{\psi}\bigoplus_{j=1}^mR(-e_j)\rightarrow M\rightarrow0,  
\end{equation*}
and corresponding short exact sequence of locally free sheaves $0\rightarrow\mathcal{S}_0\rightarrow\mathcal{T}_0\rightarrow\mathcal{Q}_0\rightarrow0$
on the generic fiber $Y_0=\mathrm{Proj}R_0$, with notations as above. Then the limit 
\begin{equation*}
 \lim_{p\rightarrow+\infty}e_{gHK}^{R_p}(M_p)
\end{equation*}
exists and it is equal to the rational number
\begin{equation*}
  \frac{1}{2\deg Y_0}\left(\mu_{HK}(\mathcal{S}_0)-\mu_{HK}(\mathcal{T}_0)+\mu_{HK}(\mathcal{Q}_0)\right).
\end{equation*}
\end{Theo}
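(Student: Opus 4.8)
The plan is to reduce the statement, prime by prime, to the explicit formula of Theorem~\ref{TheoremgHKgradedcase}, and then to let $p \to +\infty$ using Trivedi's Lemma~\ref{trivedilemma}. First I would spread out: choose $h \in \mathbb{Z}_+$ so large that, setting $\mathbb{Z}_h = \mathbb{Z}[1/h]$ and $R_h = R \otimes_{\mathbb{Z}} \mathbb{Z}_h$, the following hold for every prime $p \nmid h$: $R_h$ is flat over $\mathbb{Z}_h$ with standard graded fibers; $Y := \mathrm{Proj}\,R_h \to \mathrm{Spec}\,\mathbb{Z}_h$ is a smooth projective curve whose fibers $Y_p$ are geometrically connected with $R_p$ a geometrically normal two-dimensional domain; the coherent sheaves $\mathcal{S} := \widetilde{\ker\psi}$ and $\mathcal{Q} := \widetilde{\mathrm{im}\,\psi}$ on $Y$ are locally free and flat over $\mathbb{Z}_h$, and their formation commutes with base change, so that $\mathcal{S}_p = \widetilde{\ker\psi_p}$ and $\mathcal{Q}_p = \widetilde{\mathrm{im}\,\psi_p}$ as sheaves on $Y_p$. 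Only finitely many primes are excluded by these requirements: geometric normality of $R_p$ holds for almost all $p$ by hypothesis; local freeness of $\mathcal{S}$ and $\mathcal{Q}$ holds over the generic fiber $Y_0$, a smooth curve on which the torsion-free sheaves $\widetilde{\ker\psi_0}$ and $\widetilde{\mathrm{im}\,\psi_0}$ are automatically locally free, and by generic flatness over $\mathbb{Z}$ together with openness of the locally free locus it persists over all of $Y$ after inverting finitely many more primes; the remaining conditions are open or generic in the same manner. Flatness also forces the Hilbert polynomial of $R_p$, and hence $\deg Y_p = \deg\mathcal{O}_{Y_p}(1)$, to be independent of $p$ and equal to $\deg Y_0$.

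Next, I would apply Theorem~\ref{TheoremgHKgradedcase} fiberwise. For $p \nmid h$ the curve $Y_p$ (base changed to $\overline{\mathbb{Z}/p\mathbb{Z}}$, as in the Remark) is smooth, $R_p$ is a two-dimensional normal standard graded domain, and the given presentation of $M$ reduces to one of $M_p$ with associated short exact sequence $0 \to \mathcal{S}_p \to \mathcal{T}_p \to \mathcal{Q}_p \to 0$ on $Y_p$, where $\mathcal{T}_p = \bigoplus_i \mathcal{O}_{Y_p}(-d_i)$. Theorem~\ref{TheoremgHKgradedcase} then gives
\begin{equation*}
e_{gHK}^{R_p}(M_p) = \frac{1}{2\deg Y_0}\left(\mu_{HK}(\mathcal{S}_p) - (\deg Y_0)^2\sum_{i=1}^n d_i^2 + \mu_{HK}(\mathcal{Q}_p)\right),
\end{equation*}
where $\mu_{HK}(\mathcal{S}_p)$ and $\mu_{HK}(\mathcal{Q}_p)$ denote the characteristic-$p$ Hilbert-Kunz slopes.

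Finally, I would let $p \to +\infty$. Trivedi's Lemma~\ref{trivedilemma}, applied to the locally free sheaves $\mathcal{S}$ and $\mathcal{Q}$ on $Y \to \mathrm{Spec}\,\mathbb{Z}_h$, gives $\lim_{p \to +\infty}\mu_{HK}(\mathcal{S}_p) = \mu_{HK}(\mathcal{S}_0)$ and $\lim_{p \to +\infty}\mu_{HK}(\mathcal{Q}_p) = \mu_{HK}(\mathcal{Q}_0)$, with the characteristic-zero Hilbert-Kunz slopes on the right. Since $\tfrac{1}{2\deg Y_0}$ and $(\deg Y_0)^2\sum_i d_i^2$ do not depend on $p$, the expression above converges to the rational number
\begin{equation*}
\frac{1}{2\deg Y_0}\left(\mu_{HK}(\mathcal{S}_0) - (\deg Y_0)^2\sum_{i=1}^n d_i^2 + \mu_{HK}(\mathcal{Q}_0)\right).
\end{equation*}
To match the asserted formula it then suffices to note $(\deg Y_0)^2\sum_i d_i^2 = \mu_{HK}(\mathcal{T}_0)$, the characteristic-zero analogue of Example~\ref{slopesum}: the Harder-Narasimham filtration of $\mathcal{T}_0 = \bigoplus_i \mathcal{O}_{Y_0}(-d_i)$ has semistable quotients of the shape $\mathcal{O}_{Y_0}(-d)^{\oplus r}$ of slope $-d\deg Y_0$, so $\mu_{HK}(\mathcal{T}_0) = (\deg Y_0)^2\sum_i d_i^2$.

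The main obstacle is the spreading-out step: one must check that a single $h$ makes \emph{all} the relevant reductions well behaved at once, in particular that $\mathcal{S}$ and $\mathcal{Q}$ are locally free over the two-dimensional total space $Y$ (not merely on each fiber) and that forming $\ker$ and $\mathrm{im}$ of $\psi$ commutes with reduction modulo $p$ --- for these are precisely the conditions that make Trivedi's lemma applicable and make the formula of Theorem~\ref{TheoremgHKgradedcase} literally the $p$-th term of the sequence. Once this is in place, the convergence and the evaluation of the limit are immediate.
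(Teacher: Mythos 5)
Your proposal is correct and follows essentially the same route as the paper: spread out over $\mathrm{Spec}\,\mathbb{Z}_h$ so that the presenting sequences reduce well modulo $p$ and the sheaves $\mathcal{S}$, $\mathcal{Q}$ are locally free on the relative curve, apply Theorem \ref{TheoremgHKgradedcase} fiberwise, and pass to the limit via Lemma \ref{trivedilemma}. The only cosmetic difference is that you evaluate $\mu_{HK}(\mathcal{T}_p)=(\deg Y_0)^2\sum_i d_i^2$ directly as a $p$-independent constant, whereas the paper simply feeds $\mathcal{T}$ through Trivedi's lemma along with $\mathcal{S}$ and $\mathcal{Q}$; both are fine.
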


\begin{proof}
 Let $u_1,\dots,u_m$ be homogeneous generators of $M$ as $R$-module, and let $f_1,\dots,f_n$ be homogeneous generators of $I:=\mathrm{Syz}(u_1,\dots,u_m)$. We obtain two short exact sequences
 \begin{equation}\label{sequencesrelativesituation}
  \begin{split}
   &0\rightarrow I\rightarrow\bigoplus_{j=1}^mR(-e_j)\xrightarrow{u_1,\dots,u_m} M\rightarrow0, \ \text{ and }\\
   &0\rightarrow N\rightarrow\bigoplus_{i=1}^{n}R(-d_i)\xrightarrow{f_1,\dots,f_n}I\rightarrow0.
  \end{split}
 \end{equation}
\par Tensoring these sequences with the flat $\mathbb{Z}$-module $\mathbb{Q}$ we obtain exact sequences of $R_0$-modules. On the other hand if we apply the functor $-\otimes_{\mathbb{Z}}\mathbb{Z}/p\mathbb{Z}$ to the sequences \eqref{sequencesrelativesituation}, exactness is preserved for all primes except for a finite number of them. Let $h$ be the product of those primes, and we consider the smooth projective curve $Y=\mathrm{Proj}R_h$  over $\mathrm{Spec}\mathbb{Z}_h$. 
\par Let $U=D(R_{h+})$ denote the relative punctured spectrum. The sheaf $\widetilde{I}|_{U}$ restricts to $U_0=U\cap\mathrm{Spec}R_0$ as a locally free sheaf. By possibly shrinking the set $D(h)$ we may assume that $\widetilde{I}|_{U}$ is locally free. By further shrinking we may assume that $\mathcal{E}:=\widetilde{N}|_{U}$ is also locally free. Then for almost all  $p$, $I_p$ and $N_p$ are locally free on $U_p$.
\par Let $\mathcal{S}$, $\mathcal{T}$, $\mathcal{Q}$ be the locally free sheaves on $Y$ corresponding to $\mathcal{E}$, $\bigoplus_{i=1}^nR(-d_i)$,  $\widetilde{I}|_{U}$, which, by the second sequence of \eqref{sequencesrelativesituation}, form an exact sequence  $0\rightarrow\mathcal{S}\rightarrow\mathcal{T}\rightarrow\mathcal{Q}\rightarrow0$ on $Y$. Its restrictions give the short exact sequences $0\rightarrow\mathcal{S}_0\rightarrow\mathcal{T}_0\rightarrow\mathcal{Q}_0\rightarrow0$ on the generic fiber $Y_0$, and  $0\rightarrow\mathcal{S}_p\rightarrow\mathcal{T}_p\rightarrow\mathcal{Q}_p\rightarrow0$ on the fiber $Y_p$, for $p \nmid h$. 
\par Let $p$ be a prime number not dividing $h$, then we are in the situation of Theorem \ref{TheoremgHKgradedcase}, so we obtain
\begin{equation*}
  e_{gHK}^{R_p}(M_p)=\frac{1}{2\deg Y_p}\left(\mu_{HK}(\mathcal{S}_p)-\mu_{HK}(\mathcal{T}_p)+\mu_{HK}(\mathcal{Q}_p)\right).
\end{equation*}
 Then taking the limit for $p\rightarrow+\infty$ and applying Lemma \ref{trivedilemma} concludes the proof.
\end{proof}

\section*{Acknowledgements}
We would like to thank Mohsen Asgharzadeh for a careful reading of an earlier version of this article, and Hailong Dao for many helpful conversations. We thank the referee for showing us how to simplify the proofs of Lemma \ref{reflexivecohomologyzero} and Lemma \ref{HKclassgroup}.
Moreover, we thank the DFG \emph{Graduiertenkolleg Kombinatorische Strukturen in der Geometrie} at Osnabr\"uck for support.


\begin{thebibliography}{90}
{\small
\bibitem{Asg15} {\sc M. Asgharzadeh}, \emph{On the (LC) conjecture}, arXiv:1512.02518 , preprint (2015). 
\bibitem{Bre06} {\sc H. Brenner}, \emph{The rationality of the Hilbert-Kunz multiplicity in graded dimension two},  Math. Ann. 334 (2006), no. 1, 91--110.
\bibitem{Bre07} {\sc H. Brenner}, \emph{The Hilbert-Kunz function in graded dimension two}, Comm. Algebra 35 (2007), no. 10, 3199--3213.
\bibitem{BH98} {\sc W. Bruns and J. Herzog}, \emph{Cohen-Macaulay rings. Revised edition}, Cambridge Studies in Advanced Mathematics 39, Cambridge University Press (1998).
\bibitem{BD08} {\sc I. Burban and Y. Drozd}, \emph{Maximal Cohen-Macaulay modules over surface singularities}, Trends in representation theory of algebras and related topics, 101--166, EMS Ser. Congr. Rep., Eur. Math. Soc. (2008), Z\"urich.
\bibitem{DS13} {\sc H. Dao and I. Smirnov}, \emph{On generalized Hilbert-Kunz function and multiplicity}, arXiv:1305.1833, preprint (2013).
\bibitem{DW15} {\sc H. Dao and K. Watanabe}, \emph{Some computations of generalized Hilbert-Kunz function and multiplicity},  Proc. Amer. Math. Soc. 144 (2016), no. 8, 3199--3206.
\bibitem{EY11} {\sc N. Epstein and Y. Yao}, \emph{Some extensions of Hilbert-Kunz multiplicity}, Collect. Math. (2016), to appear.
\bibitem{HH90} {\sc M. Hochster and C. Huneke}, \emph{Tight closure, Invariant Theory, and the Briancon-Skoda Theorem}, J. Amer. Math. Soc
3 (1990), 31--116.
\bibitem{Hun13} {\sc C. Huneke}, \emph{Hilbert-Kunz multiplicity and the F-signature}, Commutative algebra (2013), 485--525, Springer, New York.
\bibitem{Lan04} {\sc A. Langer}, \emph{Semistable sheaves in positive characteristic}, Ann. Math. 159 (2004), 251--276.
\bibitem{Tri07} {\sc V. Trivedi}, \emph{Hilbert-Kunz multiplicity and reduction mod $p$}, Nagoya Math. J. 185 (2007), 123--141.
\bibitem{Vra16} {\sc A. Vraciu}, \emph{An observation on generalized Hilbert-Kunz functions}, Proc. Amer. Math. Soc. 144 (2016), no.8, 3221--3229.
}
\end{thebibliography}
\end{document}